\newcommand{\qed}{\hfill \mbox{\raggedright \rule{.07in}{.1in}}}
\newenvironment{proof}{\vspace{1ex}\noindent{\bf Proof}\hspace{0.5em}}
	{\hfill\qed\vspace{1ex}}
\newtheorem{theorem}{Theorem}
\newtheorem{example}{Example}
\newtheorem{observation}{Observation}
\newtheorem{definition}{Definition}
\newtheorem{remark}{Remark}
\newtheorem{corollary}{Corollary}
\providecommand{\keywords}[1]{\textbf{\textit{keywords:}} #1}
\date{}
\begin{document}

	\title{Inverse of $\alpha$-Hermitian Adjacency Matrix of a Unicyclic Bipartite Graph}
	

	\author{Mohammad Abudayah \thanks{School of Basic Sciences and Humanities, German Jordanian University, mohammad.abudayah@gju.edu.jo }, Omar Alomari \thanks{School of Basic Sciences and Humanities, German Jordanian University, omar.alomari@gju.edu.jo}, Omar AbuGhneim \thanks{Department of Mathematics, Faculty of Science, The University of Jordan, o.abughneim@ju.edu.jo} 

		}

\maketitle

\begin{abstract}
Let $X$ be bipartite mixed graph and for a unit complex number $\alpha$, $H_\alpha$ be its $\alpha$-hermitian adjacency matrix. If $X$ has a unique perfect matching, then $H_\alpha$ has a hermitian inverse $H_\alpha^{-1}$. In this paper we give a full description of the entries of $H_\alpha^{-1}$ in terms of the paths between the vertices. Furthermore, for $\alpha$ equals the primitive third root of unity $\gamma$ and for a unicyclic bipartite graph $X$ with unique perfect matching, we characterize when $H_\gamma^{-1}$ is $\pm 1$ diagonally similar to $\gamma$-hermitian adjacency matrix of a mixed graph. Through our work, we have provided a new construction for the $\pm 1$ diagonal matrix. 
 
\end{abstract}

\keywords{ Mixed graphs; $\alpha$-Hrmitian adjacency matrix; Inverse matrix; Bipartite mixed graphs; Unicyclic bipartite mixed graphs; Perfect matching}

\section{\normalsize Introduction}
A partially directed graph $X$ is called a mixed graph, the undirected edges in $X$ are called digons and the directed edges are called arcs. Formally, a mixed graph $X$ is a set of vertices $V(X)$ together with a set of undirected edges $E_0(D)$ and a set of directed edges $E_1(X)$. For an arc $xy \in E_1(X)$, $x$(resp. $y$) is called initial (resp. terminal) vertex. The graph obtained from the mixed graph $X$ after stripping out the orientation of its arcs is called the underlying graph of $X$ and is denoted by $\Gamma(X)$.\\
A collection of digons and arcs of a mixed graph $X$ is called a perfect matching if they are vertex disjoint and cover $V(X)$. In other words, perfect matching of a mixed graph is just a perfect matching of its underlying graph. In general, a mixed graph may have more than one perfect matching. We denote the class of bipartite mixed graphs with a unique perfect matching by $\mathcal{H}$. In this class of mixed graphs the unique perfect matching will be denoted by $\mathcal{M}$. For a mixed graph $X\in \mathcal{H}$, an arc $e$ (resp. digon)  in $\mathcal{M}$ is called matching arc (resp. matching digon) in $X$.
 If $D$ is a mixed subgraph of $X$, then the mixed graph $X\backslash D$ is the induced mixed graph over $V(X)\backslash V(D)$.\\
Studying a graph or a digraph structure through properties of a matrix associated with it is an old and rich area of research.  
For undirected graphs, the most popular and widely investigated matrix in literature is the adjacency matrix. The adjacency matrix of a graph is symmetric, and thus diagonalizable and all of its eigenvalues are real.   On the other hand, the adjacency matrix of directed graphs and mixed graphs is not symmetric and its eigenvalues are not all real. Consequently, dealing with such matrix is very challenging. Many researchers have recently proposed other adjacency matrices for digraphs. For instance in \cite{Irena}, the author investigated the spectrum of $AA^T$, where $A$ is the traditional adjacency matrix of a digraph. The author called them non negative spectrum of digraphs. In \cite{OMT1}, authors proved that the non negative spectrum is totally controlled by a vertex partition called common out neighbor partition. Authors in \cite{BMI} and in \cite{LIU2015182} (independently) proposed a new adjacency matrix of mixed graphs as follows: 
For a mixed graph $X$, the hermitian adjacency matrix of $X$ is a $|V|\times |V|$ matrix $H(X)=[h_{uv}]$, where
 
\[h_{uv} = \left\{
\begin{array}{ll}
1 & \text{if }  uv \in E_0(X),\\
i &  \text{if }  uv \in E_1(X), \\
-i &  \text{if }  vu \in E_1(X),\\
0 & \text{otherwise}.
\end{array}
\right.
\]

This matrix has many nice properties. It has real spectrum and interlacing theorem holds. Beside investigating basic properties of this hermitian adjacency matrix, authors proved many interesting properties of the spectrum of $H$. This motivated Mohar in \cite{Mohar2019ANK} to extend the previously proposed adjacency matrix. The new kind of hermitian adjacency matrices, called $\alpha$-hermitian adjacency matrices of mixed graphs, are defined as follows: Let $X$ be a mixed graph and $\alpha$ be the primitive $n^{th}$ root of unity $e^{\frac{2\pi}{n}i}$. Then the $\alpha$ hermitian adjacency matrix of $X$ is a $|V|\times |V|$ matrix $H_{\alpha}(X)=[h_{uv}]$, where
 
\[h_{uv} = \left\{
\begin{array}{ll}
1 & \text{if }  uv \in E_0(D),\\
\alpha &  \text{if }  uv \in E_1(D), \\
\overline{\alpha} &  \text{if }  vu \in E_1(D),\\
0 & \text{otherwise}.
\end{array}
\right.
\]

Clearly the new kind of hermitian adjacency matrices of mixed graphs is a natural generalization of the old one for mixed graphs and even for the graphs. As we mentioned before these adjacency matrices ($H_i(X)$ and $H_\alpha(X)$) are hermitian and have interesting properties. This paved the way to more a facinating research topic much needed nowadays.\\
For simplicity when dealing with one mixed graph $X$, then we write $H_\alpha$ instead of $H_\alpha(X)$. \\\\

The smallest positive eigenvalue of a graph plays an important role in quantum chemistry. Motivated by this application,  Godsil in \cite{God} investigated the inverse of the adjacency matrix of a bipartite graph. He proved that if $T$ is a tree graph with perfect matching and $A(T)$ is its adjacency matrix then, $A(T)$ is invertabile and there is $\{1,-1\}$ diagonal matrix $D$ such that $DA^{-1}D$ is an adjacency matrix of another graph. Many of the problems mentioned in \cite{God} are still open.  Further research appeared after this paper that continued on Godsil's work see \cite{Pavlkov}, \cite{McLeman2014GraphI} and \cite{Akbari2007OnUG}.\\
 
In this paper we study the inverse of $\alpha$-hermitian adjacency matrix $H_\alpha$ of unicyclic bipartite mixed graphs with unique perfect matching $X$. Since undirected graphs can be considered as a special case of mixed graphs, the out comes in this paper are broader than the work done previously in this area.  We examine the inverse of $\alpha$-hermitian adjacency matricies of bipartite mixed graphs and unicyclic bipartite mixed graphs. Also, for $\alpha=\gamma$, the primative third root of unity, we answer the traditional question, when $H_\alpha^{-1}$ is $\{\pm 1\}$ diagonally similar to an $\alpha$-hermitian adjacency matrix of mixed graph. To be more precise, for a unicyclic bipartite mixed graph $X$ with unique perfect matching we give full characterization when there is a $\{\pm 1\}$ diagonal matrix $D$ such that $DH_\gamma^{-1}D$ is an $\gamma$-hermitian adjacency matrix of a mixed graph. Furthermore,  through our work we introduce a construction of such diagonal matrix $D$. In order to do this, we need the following definitions and theorems:
\begin{definition}\citep{Abudayah2}
Let $X$ be a mixed graph and $H_\alpha=[h_{uv}]$ be its $\alpha$-hermitian adjacency matrix.
\begin{itemize}
\item $X$ is called elementary mixed graph if for every component $X'$ of $X$, $\Gamma(X')$ is either an edge or a cycle $C_k$ (for some $k\ge 3$).
\item For an elementary mixed graph $X$, the rank of $X$ is defined as $r(X)=n-c,$ where $n=|V(X)|$ and $c$ is the number of its components. The co-rank of $X$ is defined as $s(X)=m-r(X)$, where $m=|E_0(X)\cup E_1(X)|$.
\item For a mixed walk $W$  in $X$, where $\Gamma(W)=r_1,r_2,\dots r_k$, the value $h_\alpha(W)$ is defined as  $$h_\alpha(W)=h_{r_1r_2}h_{r_2r_3}h_{r_3r_4}\dots h_{r_{k-1}r_k}\in \{\alpha^n\}_{n\in \mathbb{Z}}$$
\end{itemize}
\end{definition}

Recall that a bijective function $\eta$ from a set $V$ to itself is called  permutation. The set of all permutations of a set $V$, denoted by $S_V$, together with functions composition form a group. Finally recall that for $\eta \in S_V$, $\eta$ can be written as composition of transpositions. In fact the number of transpositions is not unique. But this number is either odd or even and cannot be both. Now, we define $sgn(\eta)$ as $(-1)^k$, where $k$ is the number of transposition when $\eta$ is decomposed as a product of transpositions. The following theorem is  well known as a classical result in linear algebra 
\begin{theorem} \label{exp}
If $A=[a_{ij}]$ is an $n\times n$ matrix then $$det(A)=\displaystyle \sum_{\eta \in S_n } sgn(\eta) a_{1,\eta(1)}a_{2,\eta(2)}a_{3,\eta(3)}\dots a_{n,\eta(n)}  $$
\end{theorem}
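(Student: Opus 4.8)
The plan is to derive this Leibniz expansion from the characterization of $\det$ as the unique function of the $n$ rows of a matrix that is linear in each row separately, vanishes whenever two rows coincide (the alternating property), and equals $1$ on the identity matrix. I would take these three properties as the working definition of $\det$; if one instead starts from the cofactor (Laplace) expansion as the definition, the same identity follows by induction on $n$, and I indicate that alternative at the end.

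First I would write the $k$-th row of $A$ in the standard basis $e_1,\dots,e_n$ of row vectors as $A_k=\sum_{j=1}^n a_{kj}e_j$, and expand $\det(A)$ using linearity in the first row, then the second row, and so on through the $n$-th. This gives
$$\det(A)=\sum_{j_1=1}^n\cdots\sum_{j_n=1}^n a_{1,j_1}a_{2,j_2}\cdots a_{n,j_n}\,\det(e_{j_1},e_{j_2},\dots,e_{j_n}),$$
a sum of $n^n$ terms indexed by arbitrary tuples $(j_1,\dots,j_n)\in\{1,\dots,n\}^n$. Next I would discard the degenerate terms: by the alternating property $\det(e_{j_1},\dots,e_{j_n})=0$ whenever $j_a=j_b$ for some $a\neq b$, so only tuples with pairwise distinct entries survive, i.e.\ exactly those of the form $(j_1,\dots,j_n)=(\eta(1),\dots,\eta(n))$ for a permutation $\eta\in S_n$. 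For such a tuple, the matrix with rows $e_{\eta(1)},\dots,e_{\eta(n)}$ is the identity with its rows permuted by $\eta$; writing $\eta$ as a product of $k$ transpositions and using that each row swap multiplies $\det$ by $-1$, one gets $\det(e_{\eta(1)},\dots,e_{\eta(n)})=(-1)^k\det(I)=sgn(\eta)$. Substituting back yields $\det(A)=\sum_{\eta\in S_n}sgn(\eta)\,a_{1,\eta(1)}a_{2,\eta(2)}\cdots a_{n,\eta(n)}$.

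The only subtle point — the main obstacle — is the sign bookkeeping in the last step: one must know that $sgn(\eta)$ is well defined independently of the chosen transposition decomposition, and that it faithfully records the effect of the corresponding row permutation on $\det$. The excerpt already grants well-definedness of $sgn$, so what remains is the compatibility with row swaps, which is an easy induction on the number of transpositions once the single-swap case is checked from the alternating property (apply $\det(\dots,u+v,\dots,u+v,\dots)=0$ with $u,v$ in the two rows being swapped and expand by bilinearity). If one prefers the route through Laplace expansion, the same obstacle reappears as the need to match the cofactor sign $(-1)^{i+j}$ against the parity of the permutation that carries index $j$ into position $i$; this is routine but is exactly where the care is needed.
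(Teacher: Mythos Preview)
Your argument is correct: expanding by multilinearity in each row, killing the non-injective index tuples via the alternating property, and identifying the surviving coefficient $\det(e_{\eta(1)},\dots,e_{\eta(n)})$ with $sgn(\eta)$ is the standard derivation of the Leibniz formula, and your remarks on the sign bookkeeping are accurate.

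However, the paper does not actually prove Theorem~\ref{exp}. It simply records it as ``well known as a classical result in linear algebra'' and uses it as a black box (via Theorem~\ref{Determinant}) for the later results on $H_\alpha$. So there is no proof in the paper to compare against; your write-up supplies a proof where the authors chose to cite rather than argue. In the context of this paper a one-line reference to any linear algebra text would have sufficed.
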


\section{Inverse of $\alpha$-hermitian adjacency matrix of a bipartite mixed graph}
In this section, we investigate the invertibility of the $\alpha$-hermitian adjacency matrix of a bipartite mixed graph $X$. Then we find a formula for the entries of its inverse based on elementary mixed subgraphs. This will lead to a formula for the entries based on the type of the paths between vertices.
Using Theorem \ref{exp}, authors in \cite{Abudayah2} proved the following theorem.

\begin{theorem}(Determinant expansion for $H_{\alpha}$) \cite{Abudayah2} \label{Determinant}
Let $X$ be a mixed graph and $H_\alpha$ its $\alpha$-hermitian adjacency matrix, then
$$ det( H_{\alpha}) = \sum_{X'} (-1)^{r(X')}2^{s(X')}Re  \left(\prod_C h_{\alpha} ( \vec{C} )\right) $$ 
where the sum ranges over all spanning elementary mixed subgraphs $X'$ of $X$, the product ranges over all mixed cycles $C$ in $X'$, and $\vec{C}$ is any mixed closed walk traversing $C$.
\end{theorem}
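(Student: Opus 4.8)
The plan is to read the formula off the Leibniz expansion of Theorem~\ref{exp}, after identifying $V(X)$ with $\{1,\dots,n\}$ and sorting the permutations in $S_n$ by the combinatorial configuration they trace out in $X$. A permutation $\eta$ contributes a nonzero summand $sgn(\eta)\,h_{1,\eta(1)}\cdots h_{n,\eta(n)}$ only when $h_{i,\eta(i)}\neq 0$ for every $i$; since $X$ has no loops the diagonal of $H_\alpha$ vanishes, so such an $\eta$ is fixed-point-free and every pair $\{i,\eta(i)\}$ is a digon or an arc of $X$. Writing $\eta$ as a product of disjoint cycles, a transposition $(i\ j)$ records the edge/arc $ij$ used in both directions, and a cycle $(i_1\,i_2\,\cdots\,i_k)$ with $k\ge 3$ records --- the $i_\ell$ being distinct --- an honest cycle $C_k$ of $\Gamma(X)$ together with a choice of traversal direction. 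Hence the set of edges/arcs touched by $\eta$ is a spanning elementary mixed subgraph $X'$ of $X$, and every spanning elementary mixed subgraph arises this way; so I would regroup the expansion as $\det(H_\alpha)=\sum_{X'}\big(\sum_{\eta\mapsto X'}sgn(\eta)\,h_{1,\eta(1)}\cdots h_{n,\eta(n)}\big)$, the outer sum over spanning elementary mixed subgraphs.

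The next step is to evaluate the inner sum over all $\eta$ inducing a fixed $X'$. If $X'$ has $c$ components, of which $a$ are single edges/arcs and $C_1,\dots,C_t$ are cycles (each of length $\ge 3$), then such an $\eta$ is determined by an independent choice of orientation on each $C_j$, so exactly $2^{t}$ permutations map to $X'$; and a direct count using $|V(X')|=2a+\sum|C_j|$ and $|E(X')|=a+\sum|C_j|$ gives $t=|E(X')|-(|V(X')|-c)=s(X')$, so the multiplicity is $2^{s(X')}$. Each such $\eta$ has precisely $c$ cycles in its cycle decomposition, whence $sgn(\eta)=(-1)^{\,n-c}=(-1)^{r(X')}$, the same value for all of them. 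As for the weight, every edge/arc component $\{i,j\}$ contributes $h_{ij}h_{ji}=h_{ij}\overline{h_{ij}}=|h_{ij}|^{2}=1$ because every nonzero entry of $H_\alpha$ lies on the unit circle, while a cycle component $C_j$ traversed in the chosen direction contributes exactly $h_\alpha(\vec{C_j})$. Reversing the traversal of a closed walk conjugates its $h_\alpha$-value (since $h_{uv}=\overline{h_{vu}}$), so summing over the two orientations of each cycle independently turns the inner sum into $(-1)^{r(X')}\prod_{j}\big(h_\alpha(\vec{C_j})+\overline{h_\alpha(\vec{C_j})}\big)$, which is $(-1)^{r(X')}2^{s(X')}$ times the real quantity displayed in the theorem; reassembling over all $X'$ yields the stated identity.

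The points that need genuine care --- and where I expect the real work to lie --- are the orientation bookkeeping. One must check that ``$\eta\mapsto X'$'' genuinely partitions the fixed-point-free edge-supported permutations (no $\eta$ induces two different subgraphs and no subgraph is missed), that the orientations of distinct cycle components can be flipped independently so that the multiplicity is $2^{s(X')}$ exactly, and that $h_\alpha(\vec C)$ is well defined as a function of $C$ and a direction, i.e. independent of the base vertex and of the particular closed walk chosen --- which holds because it is just the product of the $h$-entries around $C$. Finally one should verify that the sign and the weight both factor through the component decomposition of $X'$ (immediate once the cycle decomposition of $\eta$ is matched up with the components of $X'$), and handle the packaging of the per-cycle factors $h_\alpha(\vec{C_j})+\overline{h_\alpha(\vec{C_j})}=2\,Re(h_\alpha(\vec{C_j}))$ into the $2^{s(X')}\,Re(\cdot)$ written in the statement; for the elementary subgraphs relevant later in this paper (which carry at most one cycle) this last repackaging is immediate.
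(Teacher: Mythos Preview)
The paper does not itself prove this theorem but cites \cite{Abudayah2}, noting only that the argument rests on the Leibniz expansion of Theorem~\ref{exp}; your proof follows exactly this route and is correct. Your caveat in the final paragraph is well taken: the Leibniz computation naturally produces $\prod_{C}2\,Re\big(h_\alpha(\vec C)\big)$ rather than $2^{s(X')}\,Re\big(\prod_{C}h_\alpha(\vec C)\big)$, and these two expressions coincide only when $X'$ carries at most one cycle component --- which is the only case ever used in this paper.
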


Now, let $X\in \mathcal{H}$ and $\mathcal{M}$ is the unique perfect matching in $X$. Then since $X$ is bipartite graph, $X$ contains no odd cycles. Now, let $C_k$ be a cycle in $X$, then if $C_k \cap \mathcal{M}$ is a perfect matching of $C_k$ then, $\mathcal{M} \Delta C_k= \mathcal{M}\backslash C_k \cup  C_k \backslash \mathcal{M}$ is another perfect matching in $X$ which is a contradiction. Therefore there is at least one vertex of $C_k$ that is matched by a matching edge not in $C_k$. This means if $X\in \mathcal{H}$, then $X$ has exactly one spanning elementary mixed subgraph that consist of only $K_2$ components. Therefore, Using the above discussion together with Theorem \ref{Determinant} we get the following theorem.

\begin{theorem}\label{Inv}
If $X\in \mathcal{H}$ and $H_\alpha$ is its $\alpha$-hermitian adjacency matrix then $H_\alpha$ is non singular.
\end{theorem}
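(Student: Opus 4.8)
The plan is to compute $\det(H_\alpha)$ outright from the determinant expansion in Theorem~\ref{Determinant} and check that it cannot vanish. That expansion runs over all spanning elementary mixed subgraphs $X'$ of $X$, so the first task is to pin down which ones actually occur. I would prove that the unique perfect matching $\mathcal{M}$, viewed as the spanning subgraph whose components are exactly the $|V(X)|/2$ matching edges (each a $K_2$), is the \emph{only} spanning elementary mixed subgraph of $X$.

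To establish this, suppose for contradiction that some spanning elementary mixed subgraph $X'$ has a component $C$ whose underlying graph is a cycle. Because $X$ is bipartite, $\Gamma(C)$ is an even cycle, so it has two distinct perfect matchings $M_1 \neq M_2$. Deleting $V(C)$ from $X'$ leaves a spanning elementary mixed subgraph of $X\backslash C$ (we have merely discarded whole components of $X'$); replacing each of its cyclic components --- again even cycles --- by one of its perfect matchings yields a perfect matching $N$ of $X\backslash C$. Then $N\cup M_1$ and $N\cup M_2$ are two distinct perfect matchings of $X$, contradicting $X\in\mathcal{H}$. Hence $X'$ has no cyclic component, so all its components are copies of $K_2$, i.e.\ $X'$ is a perfect matching of $X$ and therefore equals $\mathcal{M}$. (This is slightly stronger than the remark preceding the statement, which only rules out cycles that are perfectly matched within themselves.)

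Once the claim is in place, the sum in Theorem~\ref{Determinant} reduces to the single term coming from $X'=\mathcal{M}$. Writing $n=|V(X)|$, this $X'$ has $n/2$ components, so $r(\mathcal{M}) = n-n/2 = n/2$; it has $n/2$ edges, so $s(\mathcal{M}) = n/2 - n/2 = 0$; and since $\mathcal{M}$ contains no mixed cycle, the product $\prod_C h_\alpha(\vec{C})$ is empty and equals $1$. Substituting gives $\det(H_\alpha) = (-1)^{n/2}\,2^{0}\,\mathrm{Re}(1) = (-1)^{n/2} \neq 0$, so $H_\alpha$ is nonsingular.

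The only real obstacle is the structural claim that no spanning elementary mixed subgraph of a graph in $\mathcal{H}$ contains a cycle; everything after that is just reading off $r$, $s$ and the trivial cyclic product from the definitions and plugging into the formula. A minor point to keep straight is that $X'$ could carry several cyclic components simultaneously, but the contradiction argument only needs one of them, so this causes no difficulty.
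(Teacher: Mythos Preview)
Your proof is correct and follows the same strategy as the paper: identify $\mathcal{M}$ as the unique spanning elementary mixed subgraph of $X$ and then read off $\det(H_\alpha)=(-1)^{n/2}$ from Theorem~\ref{Determinant}. If anything, your justification of the uniqueness claim is more complete than the paper's sketch --- the paper only argues that no cycle of $X$ is $\mathcal{M}$-alternating, whereas you directly rule out any cyclic component in a spanning elementary subgraph by building two distinct perfect matchings of $X$ from it, which is exactly what is needed.
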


Now, Let $X$ be a mixed graph and $H_\alpha$ be its $\alpha$-hermitian adjacency matrix. Then, for invertible $H_\alpha$, the following theorem finds a formula for the entries of $H_\alpha^{-1}$ based on elementary mixed subgraphs and paths between vertices. The proof can be found in \cite{invtree}.

\begin{theorem}\label{Thm1}
Let $X$ be a mixed graph, $H_\alpha$ be its $\alpha$-hermitian adjacency matrix and for $i \neq j$, $\rho_{i \to j}=\{ P_{i \to j}: P_{i \to j} \text{ is a mixed path from the vertex } i \text{ to the vertex } j \}$. If $\det(H_\alpha) \ne 0$, then
\begin{align*}
	[H_\alpha^{-1}]_{ij} =&\\
	& \frac{1}{\det(H_\alpha)}\displaystyle \sum_{P_{i \to j}\in \rho_{i \to j}} (-1)^{|E(P_{i \to j})|} \text{ } h_\alpha (P_{i \to j})  \sum_{X'} (-1)^{r(X')} 2^{s(X')} Re \left(  \prod_C h_\alpha (\vec{C})\right)    
\end{align*}
where the second sum ranges over all spanning elementary mixed subgraphs $X'$ of $X\backslash P_{i \to j}$, the product is being taken over all mixed cycles $C$ in $X'$ and $\vec{C}$ is any mixed closed walk traversing $C$.
\end{theorem}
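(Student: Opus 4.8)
The plan is to read the formula directly off Cramer's rule combined with the permutation expansion of the determinant (Theorem~\ref{exp}). Fix $i\ne j$. The $j$-th column of $H_\alpha^{-1}$ is the unique solution $x$ of $H_\alpha x=e_j$ (which exists since $\det(H_\alpha)\ne0$ by hypothesis), so $[H_\alpha^{-1}]_{ij}=x_i$, and Cramer's rule gives $x_i=\det(B)/\det(H_\alpha)$, where $B$ is obtained from $H_\alpha$ by replacing its $i$-th column with the standard basis vector $e_j$. Working with $B$ rather than a cofactor is convenient: it makes the sign $(-1)^{i+j}$ of a cofactor expansion disappear into the bookkeeping automatically. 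Everything therefore reduces to evaluating $\det(B)$ combinatorially.

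Apply Theorem~\ref{exp} to $B=[b_{uv}]$, so $\det(B)=\sum_{\eta\in S_{V(X)}}sgn(\eta)\prod_{u}b_{u,\eta(u)}$. Since the $i$-th column of $B$ equals $e_j$, a term survives only if $\eta^{-1}(i)=j$, i.e. $\eta(j)=i$; for such $\eta$ the product equals $\prod_{u\ne j}h_{u,\eta(u)}$, the factor from column $i$ being $b_{j,i}=1$. Now decompose $\eta$ along the cycle through $j$: since $\eta$ is a permutation this cycle has the form $(v_0\,v_1\,\cdots\,v_\ell)$ with pairwise distinct $v_k$, and because $\eta(j)=i$ we may list it with $v_0=i$, whence $v_\ell=j$ (as $\eta(v_\ell)=v_0=i$ forces $v_\ell=\eta^{-1}(i)=j$). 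For the term to be nonzero we need $h_{v_kv_{k+1}}\ne0$ for each $k<\ell$, i.e. $P:=v_0v_1\cdots v_\ell\in\rho_{i\to j}$; and because $b_{v_\ell,v_0}=b_{j,i}=1$, the portion of the product attached to this cycle is exactly $h_{v_0v_1}\cdots h_{v_{\ell-1}v_\ell}=h_\alpha(P)$. The remaining part $\tau:=\eta|_{V(X)\backslash V(P)}$ is an arbitrary permutation of $V(X)\backslash V(P)$, and $\eta\mapsto(P,\tau)$ is a bijection onto all such pairs.

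It remains to reassemble. The cycle $(v_0\cdots v_\ell)$ has length $\ell+1$, hence sign $(-1)^{\ell}=(-1)^{|E(P)|}$, so $sgn(\eta)=(-1)^{|E(P)|}sgn(\tau)$; summing over all $\tau\in S_{V(X)\backslash V(P)}$ and invoking Theorem~\ref{exp} once more identifies $\sum_\tau sgn(\tau)\prod_{u\notin V(P)}h_{u,\tau(u)}$ with $\det\big(H_\alpha(X\backslash P)\big)$. Grouping the terms of $\det(B)$ by the path $P$ therefore yields
\begin{align*}
\det(B)=\sum_{P\in\rho_{i\to j}}(-1)^{|E(P)|}\,h_\alpha(P)\,\det\big(H_\alpha(X\backslash P)\big).
\end{align*}
Finally, apply Theorem~\ref{Determinant} to each inner determinant $\det(H_\alpha(X\backslash P))$ to rewrite it as $\sum_{X'}(-1)^{r(X')}2^{s(X')}Re\big(\prod_C h_\alpha(\vec C)\big)$, the sum being over spanning elementary mixed subgraphs $X'$ of $X\backslash P$; dividing by $\det(H_\alpha)$ gives precisely the claimed expression for $[H_\alpha^{-1}]_{ij}$.

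The only genuinely delicate points are the two structural claims behind the decomposition $\eta\leftrightarrow(P,\tau)$: that the $\eta$-orbit of $j$ is a \emph{simple} path from $i$ to $j$ (which rests on $\eta$ being a bijection together with $\eta(j)=i$), and that this correspondence is a bijection onto pairs consisting of a path $P\in\rho_{i\to j}$ and a permutation of the remaining vertices, so that the double sum factors cleanly. Carrying the sign $(-1)^{|E(P)|}$ correctly through this bijection is the other place to be careful. Once both are settled, the rest is exactly the Sachs-type computation already performed in the proof of Theorem~\ref{Determinant}, now applied to $H_\alpha(X\backslash P)$ in place of $H_\alpha$.
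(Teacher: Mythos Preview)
The paper does not supply its own proof of Theorem~\ref{Thm1}; it merely states the result and refers the reader to \cite{invtree}. There is therefore no in-paper argument to compare yours against.

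On its own merits your argument is correct and is the standard route to such a formula: Cramer's rule to write $[H_\alpha^{-1}]_{ij}=\det(B)/\det(H_\alpha)$, the permutation expansion of $\det(B)$, and the observation that the surviving permutations are exactly those with $\eta(j)=i$, whose cycle through $j$ traces a mixed path $P\in\rho_{i\to j}$ while the restriction to $V(X)\setminus V(P)$ is an arbitrary permutation $\tau$. The bijection $\eta\leftrightarrow(P,\tau)$, the sign computation $sgn(\eta)=(-1)^{|E(P)|}sgn(\tau)$, and the identification $\sum_\tau sgn(\tau)\prod_{u\notin V(P)}h_{u,\tau(u)}=\det\big(H_\alpha(X\setminus P)\big)$ followed by Theorem~\ref{Determinant} are all carried out correctly. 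This is presumably also how the cited reference proceeds.
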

This theorem describes how to find the non diagonal entries of $H_\alpha^{-1}$. In fact, the diagonal entries may or may not equal to zero. To observe this, lets consider the following example:

\begin{example}
Consider the mixed graph $X$ shown in Figure \ref{fig:A} and let $\alpha=e^{\frac{\pi}{5}i}$. The mixed graph $X$ has a unique perfect matching, say $M$, and this matching consists of the set of unbroken arcs and digons. Further $M$ is the unique spanning elementary mixed subgraph of $X$. Therefore, using Theorem \ref{Determinant}
\[ det[H_\alpha]= (-1)^{8-4}2^{4-4}=1
\] 
So, $H_\alpha$ is invertible. To calculate $[H_\alpha^{-1}]_{ii}$, we observe that 
\[ [H_\alpha^{-1}]_{ii}= \frac{det((H_\alpha)_{(i,i)})}{det(H_\alpha)}=det((H_\alpha)_{(i,i)}).
\]
Where $(H_\alpha)_{(i,i)}$ is the matrix obtained from $H_\alpha$ by deleting the $i^{th}$ row and $i^{th}$ column, which is exactly the $\alpha$-hermitian adjacency matrix of $X\backslash \{i\}$. Applying this on the mixed graph, one can deduce that the diagonal entries of $H_\alpha^{-1}$ are all zeros except the entry $(H_\alpha^{-1})_{11}$. In fact it can be easily seen that the mixed graph $X \backslash \{1\}$ has only one spanning elementary mixed subgraph. Therefore,
\[ [H_\alpha^{-1}]_{11}=det((H_\alpha)_{(1,1)})=(-1)^{7-2}2^{6-5}Re(\alpha)=-2Re(\alpha).
\]
\begin{figure}[ht]
\centering
\includegraphics[width=0.8\linewidth]{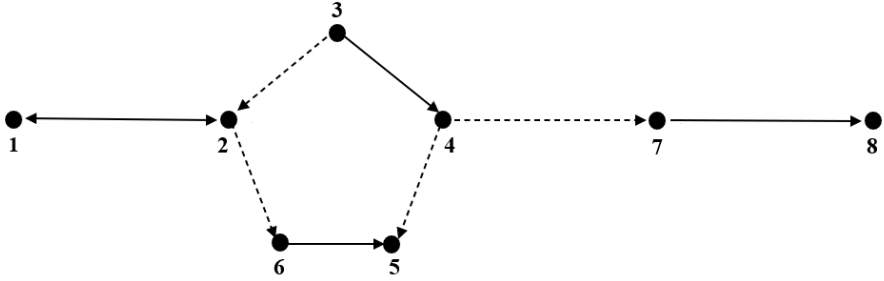}
\caption{Mixed Graph $X$ where $H_\alpha^{-1}$ has nonzero diagonal entry}
\label{fig:A}
\end{figure}

\end{example}
The following theorem shows that if $X$ is a bipartite mixed graph with unique perfect matching, then the diagonal entries of $H_\alpha^{-1}$ should be all zeros. 
\begin{theorem}

Let $X \in \mathcal{H}$ and $H_\alpha$ be its $\alpha$-hermitian adjacency matrix. Then, for every vertex $i \in V(X)$, $(H_\alpha^{-1})_{ii} =0$.
\end{theorem}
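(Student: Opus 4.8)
The plan is to reduce the statement to a determinant computation and then read it off from the bipartite structure. First I would recall, exactly as in the Example preceding the statement, that cofactor expansion gives
$$(H_\alpha^{-1})_{ii} = \frac{\det\big((H_\alpha)_{(i,i)}\big)}{\det(H_\alpha)},$$
that $(H_\alpha)_{(i,i)}$ is precisely the $\alpha$-hermitian adjacency matrix of $X\setminus\{i\}$, and that $\det(H_\alpha)\neq 0$ by Theorem \ref{Inv}. Hence it suffices to prove $\det\big(H_\alpha(X\setminus\{i\})\big)=0$ for every $i\in V(X)$.

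Next I would apply the determinant expansion of Theorem \ref{Determinant} to the mixed graph $X\setminus\{i\}$: its determinant is a signed sum ranging over all spanning elementary mixed subgraphs of $X\setminus\{i\}$. The crux of the argument is to show that this family is empty. Since $X\in\mathcal H$ is bipartite and carries the perfect matching $\mathcal M$, its two colour classes $A$ and $B$ have equal size, because every edge of $\mathcal M$ has one endpoint in each class. Deleting a vertex $i$ — say $i\in A$ — leaves colour classes of sizes $|A|-1$ and $|B|=|A|$, which are unequal. On the other hand, in any spanning elementary mixed subgraph $X'$ of $X\setminus\{i\}$ every component is a $K_2$ or a cycle $C_k$; as $\Gamma(X)$ is bipartite, every such cycle is even, so each component meets $A$ and $B$ in the same number of vertices. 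Summing over components, $X'$ would cover equally many vertices of each colour class, contradicting the size mismatch. Therefore $X\setminus\{i\}$ has no spanning elementary mixed subgraph, the sum in Theorem \ref{Determinant} is empty, $\det\big(H_\alpha(X\setminus\{i\})\big)=0$, and consequently $(H_\alpha^{-1})_{ii}=0$.

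I do not expect a genuine obstacle here; the proof is short. The only points that need care are the bookkeeping identity $(H_\alpha)_{(i,i)}=H_\alpha(X\setminus\{i\})$ and the observation that the word \emph{spanning} forces every vertex of $X\setminus\{i\}$ — in particular the now-unmatched neighbour of $i$ — to lie in some $K_2$ or cycle component, which is exactly where the parity mismatch between $|A|-1$ and $|B|$ rules everything out.
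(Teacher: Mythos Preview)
Your argument is correct and follows the same route as the paper: reduce $(H_\alpha^{-1})_{ii}$ to $\det(H_\alpha(X\setminus\{i\}))/\det(H_\alpha)$ via cofactors, then show the numerator vanishes because $X\setminus\{i\}$ admits no spanning elementary mixed subgraph. The paper phrases the obstruction more tersely---$X\setminus\{i\}$ has an odd number of vertices, hence (using bipartiteness so that every cycle component is even) no spanning elementary subgraph---whereas you make the bipartite parity explicit via the colour-class imbalance, but the content is the same.
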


\begin{proof}
Observing that $X$ is a bipartite mixed graph with a unique perfect matching, and using Theorem \ref{Inv}, we have $H_\alpha$ is invertable. Furthermore,
\[
	(H_\alpha^{-1})_{ii} = \frac{\det((H_\alpha)_{(i,i)})}{\det(H_\alpha)}
\]

Note that $(H_\alpha)_{(i,i)}$ is the $\alpha$-hermitian adjacency matrix of the mixed graph $X\backslash \{i\}$. However $X$ has a unique perfect matching, therefore $X\backslash \{i\}$ has an odd number of vertices. Hence $X\backslash \{i\}$ has neither a perfect matching nor an elementary mixed subgraph and thus $\det((H_\alpha)_{(i,i)})=0$.
\end{proof}\\
Now, we investigate the non diagonal entries of the inverse of the $\alpha$-hermitian adjacency matrix of a bipartite mixed graph, $X \in \mathcal{H}$. In order to do that we need to characterize the structure of the mixed graph $X \backslash P$ for every mixed path $P$ in $X$. To this end, consider the following theorems:

\begin{theorem}\cite{clark1991first}\label{clark}
Let $M$ and $M'$ be two matchings in a graph $G$. Let $H$ be the subgraph of $G$ induced by the set of edges $$M \Delta M'=(M\backslash M') \cup (M' \backslash M).$$
Then, the components of $H$ are either cycles of even number of vertices whose edges alternate in $M$ and $M'$ or a path whose edges alternate in $M$ and $M'$ and end vertices unsaturated in one of the two matchings.
\end{theorem}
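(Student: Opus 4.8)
The plan is to use the elementary fact that $H$ has maximum degree at most $2$, deduce that each of its components is a path or a cycle, and then read off the alternation property and the endpoint conditions directly from the fact that $M$ and $M'$ are matchings.

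First I would note that every vertex $v$ of $H$ satisfies $\deg_H(v)\le 2$: the edges of $H$ incident to $v$ all lie in $M\Delta M'\subseteq M\cup M'$, and at most one of them lies in $M$ (since $M$ is a matching) while at most one lies in $M'$ (since $M'$ is a matching). A connected graph of maximum degree at most $2$ is a path or a cycle, and $H$, being the subgraph generated by the edge set $M\Delta M'$, has no isolated vertices; hence every component of $H$ is a nontrivial path or a cycle.

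Next I would show that consecutive edges of such a component alternate between $M\setminus M'$ and $M'\setminus M$. If two $H$-edges $uv$ and $vw$ both belonged to $M$, then $v$ would meet two edges of $M$, contradicting that $M$ is a matching; the same argument applies with $M'$. Since each edge of $H$ lies in exactly one of $M\setminus M'$ or $M'\setminus M$, the alternation follows at once. In particular a cycle component alternates between two edge classes and therefore has an even number of edges, hence an even number of vertices.

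Finally, for a path component let $v$ be an endpoint, so $\deg_H(v)=1$ and $v$ meets exactly one $H$-edge $e$, say $e\in M\setminus M'$ (the other case being symmetric). If $v$ were saturated by $M'$ via an edge $vw\in M'$, then either $vw\in M'\setminus M$, a second $H$-edge at $v$ contradicting $\deg_H(v)=1$, or $vw\in M\cap M'$, giving $v$ the two $M$-edges $e$ and $vw$ and again contradicting that $M$ is a matching; so $v$ is unsaturated by $M'$. Symmetrically, an endpoint meeting an edge of $M'\setminus M$ is unsaturated by $M$, which is exactly the asserted description of the path components. The argument is entirely elementary; the one step that needs care is this endpoint analysis, where one must remember that an edge of $M\cap M'$, although it does not appear in $H$, could a priori cover an endpoint of a path component, so this possibility has to be excluded using the matching property rather than the structure of $H$ itself.
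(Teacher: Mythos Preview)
Your proof is correct. Note that the paper does not actually prove this theorem; it quotes it from \cite{clark1991first} and uses it as a black box, so there is no ``paper's own proof'' to compare against. The argument you give---bounding the maximum degree of $H$ by $2$, deducing that components are paths or cycles, reading off the alternation from the matching property, and handling the endpoint case by ruling out both $M'\setminus M$ and $M\cap M'$ edges at a degree-$1$ vertex---is exactly the standard textbook proof (and is what one finds in the cited reference). Your care in the endpoint analysis, explicitly excluding an $M\cap M'$ edge at an endpoint, is the one place students sometimes slip, and you have it right.
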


\begin{corollary} \label{c1}
For any graph $G$, if $G$ has a unique perfect matching then $G$ does not contain alternating cycle.
 \end{corollary}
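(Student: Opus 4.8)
The plan is to prove the contrapositive: if $G$ contains an alternating cycle with respect to a perfect matching $M$, then $M$ is not unique. So suppose $M$ is a perfect matching of $G$ and $C$ is a cycle whose edges alternate between $M$ and $E(G)\setminus M$. Since the edges of $C$ alternate in and out of $M$, the cycle $C$ must have an even number of edges, and exactly half of them — every other one — lie in $M$; call this set $M\cap E(C)$, and let $E(C)\setminus M$ be the complementary half of the edges of $C$.

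The key step is to form the symmetric difference $M' = M \,\Delta\, E(C) = (M\setminus E(C)) \cup (E(C)\setminus M)$ and check that $M'$ is again a perfect matching. Every vertex not on $C$ keeps the matching edge it had in $M$, so it is still covered exactly once. Every vertex $v$ on $C$ was covered in $M$ either by an edge of $C$ (the case that matters) — indeed, since $C$ is $M$-alternating and spans its own vertices with its $M$-edges, each vertex of $C$ is incident to exactly one edge of $M\cap E(C)$; in $M'$ that edge is swapped for the unique incident edge of $E(C)\setminus M$, so $v$ is again covered exactly once. Hence $M'$ is a perfect matching, and since $E(C)\setminus M \ne \emptyset$ we have $M' \ne M$, contradicting uniqueness.

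I do not expect a serious obstacle here; the statement is essentially a corollary of Theorem~\ref{clark} once one observes that an $M$-alternating cycle is precisely the kind of even cycle that appears as a component of $M\,\Delta\,M'$. The one point requiring a line of care is the claim that on an $M$-alternating cycle, the edges of $M$ lying on $C$ already saturate every vertex of $C$ — i.e. no vertex of $C$ is matched by $M$ to a vertex outside $C$. This follows because at each vertex of $C$ the two incident cycle-edges alternate, so exactly one of them is in $M$, and a vertex cannot be matched twice. With that in hand the symmetric-difference construction goes through verbatim and produces the second perfect matching.

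Alternatively, one can invoke Theorem~\ref{clark} directly: given the alternating cycle $C$, set $M' = M\,\Delta\,E(C)$; then $M\,\Delta\,M' = E(C)$ is a single even cycle, which is consistent with Theorem~\ref{clark}, and the same vertex-cover bookkeeping shows $M'$ is a perfect matching distinct from $M$. Either route is short, so I would present the self-contained symmetric-difference argument and remark that it also follows from Theorem~\ref{clark}.
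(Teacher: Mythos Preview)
Your argument is correct. The paper actually states this corollary without proof, treating it as an immediate consequence of Theorem~\ref{clark}; your symmetric-difference construction $M' = M \,\Delta\, E(C)$ is exactly the standard way to make that consequence explicit, so your write-up is in line with (and more detailed than) what the paper provides.
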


\begin{definition}
Let $X$ be a mixed graph with unique perfect matching. A path $P$ between two vertices $u$ and $v$ in $X$ is called co-augmenting path if the edges of the underlying path of $P$ alternates between matching edges and non-matching edges where both first and last edges of $P$ are matching edges.
\end{definition}
\begin{corollary} \label{c2}
Let $G$ be a bipartite graph with unique perfect matching $\mathcal{M}$, $u$ and $v$ are two vertices of $G$. If $P_{uv}$ is a co-augmenting path between $u$ and $v$, then $G \backslash P_{uv}$ is a bipartite graph with unique perfect matching $\mathcal{M}\backslash P_{uv}$.
 \end{corollary}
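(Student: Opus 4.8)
The plan is to first pin down the structure of a co-augmenting path relative to $\mathcal{M}$, then exhibit $\mathcal{M}\setminus P_{uv}$ as a perfect matching of $G\setminus P_{uv}$, and finally deduce uniqueness by lifting an arbitrary perfect matching of $G\setminus P_{uv}$ back to one of $G$.

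Write $P_{uv}$ as $u=w_0,w_1,\dots,w_{2k+1}=v$. Since the underlying path alternates between matching and non-matching edges and both its first and last edges are matching edges, the matching edges of $P_{uv}$ are exactly $w_0w_1,\ w_2w_3,\ \dots,\ w_{2k}w_{2k+1}$, and these $k+1$ edges already saturate every vertex of $V(P_{uv})$. Because $\mathcal{M}$ is a matching, the unique $\mathcal{M}$-edge at each $w_j$ must be the one lying inside $P_{uv}$; hence no edge of $\mathcal{M}$ has exactly one endpoint in $V(P_{uv})$, and $\mathcal{M}$ decomposes as the disjoint union of $\mathcal{M}\cap P_{uv}$ (a perfect matching of $V(P_{uv})$) together with $\mathcal{M}\setminus P_{uv}$, all of whose edges avoid $V(P_{uv})$. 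Consequently $\mathcal{M}\setminus P_{uv}$ is a set of independent edges of $G\setminus P_{uv}$ covering $V(G)\setminus V(P_{uv})$, i.e. a perfect matching of $G\setminus P_{uv}$; and $G\setminus P_{uv}$ is bipartite, being an induced subgraph of $G$.

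For uniqueness, let $M'$ be any perfect matching of $G\setminus P_{uv}$. Then $M'\cup(\mathcal{M}\cap P_{uv})$ is a set of independent edges of $G$ covering all of $V(G)$, hence a perfect matching of $G$. Since $\mathcal{M}$ is the unique perfect matching of $G$, we get $M'\cup(\mathcal{M}\cap P_{uv})=\mathcal{M}$, and as the second term equals $\mathcal{M}\cap P_{uv}$ this forces $M'=\mathcal{M}\setminus P_{uv}$. (Alternatively, a second perfect matching $M'$ of $G\setminus P_{uv}$ would make $M'\,\Delta\,(\mathcal{M}\setminus P_{uv})$ contain an even alternating cycle by Theorem \ref{clark}; since $\mathcal{M}\setminus P_{uv}\subseteq\mathcal{M}$ while the remaining edges of that cycle lie in $G\setminus P_{uv}$ and hence outside $\mathcal{M}$, it would be an $\mathcal{M}$-alternating cycle in $G$, contradicting Corollary \ref{c1}.)

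The only point requiring care is the bookkeeping in the middle paragraph: verifying that the matching edges of a co-augmenting path saturate all of its vertices and — crucially — that this forces every $\mathcal{M}$-edge meeting $V(P_{uv})$ to stay inside $P_{uv}$. This is exactly what ensures that deleting $V(P_{uv})$ removes whole $\mathcal{M}$-edges and leaves behind a genuine perfect matching; the bipartiteness and the uniqueness argument are then immediate.
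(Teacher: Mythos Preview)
Your proof is correct and follows essentially the same approach as the paper: exhibit $\mathcal{M}\setminus P_{uv}$ as a perfect matching of $G\setminus P_{uv}$, then lift an arbitrary perfect matching $M'$ of $G\setminus P_{uv}$ to the perfect matching $M'\cup(\mathcal{M}\cap P_{uv})$ of $G$ and invoke uniqueness. The paper declares the first part ``obvious'' where you spell out why no $\mathcal{M}$-edge crosses out of $V(P_{uv})$, and for uniqueness the paper detours through Theorem~\ref{clark} to locate an alternating cycle before performing the same lifting, whereas your main argument does the lifting directly; your parenthetical alternative is precisely the paper's route.
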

\begin{proof}
The part that $\mathcal{M}\backslash P_{uv}$ is being a perfect matching of $G \backslash P_{uv}$ is obvious.
Suppose that $M' \ne \mathcal{M}\backslash P_{uv}$ is another perfect matching of  $G \backslash P_{uv}$. Using Theorem \ref{clark},  $G \backslash P_{uv}$ consists of an alternating cycles or an alternating paths, where its edges alternate between $\mathcal{M}\backslash P_{uv}$ and $M'$. If all $G \backslash P_{uv}$ components are paths, then $G \backslash P_{uv}$ has exactly one perfect matching, which is a contradiction. Therefore, $G \backslash P_{uv}$ contains an alternating cycle say $C$. Since $P_{uv}$ is a co-augmenting path, we have $M' \cup (P_{uv} \cap \mathcal{M})$ is a perfect matching of $G$. Therefore $G$ has more than one perfect matching, which is a contradiction.
\end{proof}\\
\begin{theorem}\label{nco}
Let $G$ be a bipartite graph with unique perfect matching $\mathcal{M}$, $u$ and $v$ are two vertices of $G$. If $P_{uv}$ is not a co-augmenting path between $u$ and $v$, then $G \backslash P_{uv}$ does not have a perfect matching.
\end{theorem}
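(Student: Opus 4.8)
The plan is to argue by contradiction on the parity/matchability of $G\backslash P_{uv}$, using the hypothesis that $P_{uv}$ fails to be co-augmenting. First I would split into cases according to why $P_{uv}$ is not co-augmenting. Since $G$ is bipartite with unique perfect matching $\mathcal{M}$, every vertex is $\mathcal{M}$-saturated, and along any path one can look at which edges lie in $\mathcal{M}$. The path $P_{uv}$ fails to be co-augmenting for one of the following reasons: either two consecutive edges of $P_{uv}$ both lie in $\mathcal{M}$ (impossible, since $\mathcal{M}$ is a matching, so this case is vacuous), or two consecutive edges both lie outside $\mathcal{M}$, or the path does alternate but one (or both) of its end edges is a non-matching edge. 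So effectively there are two genuine situations: (i) $P_{uv}$ is alternating but starts or ends with a non-matching edge, and (ii) $P_{uv}$ contains two consecutive non-matching edges.

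For case (i), suppose $P_{uv}$ alternates and its first edge at $u$ is a non-matching edge. Then $u$ is not covered by any matching edge inside $P_{uv}$, so the matching edge $e_u\in\mathcal{M}$ saturating $u$ has its other endpoint outside $V(P_{uv})$; hence in $G\backslash P_{uv}$ the vertex count matters. The cleaner route: count how many vertices of $V(P_{uv})$ are saturated by matching edges internal to $P_{uv}$. If $P_{uv}$ has $k$ edges and alternates with both ends matching edges, then $|V(P_{uv})|=k+1$ is even and $\mathcal{M}$ restricted to $P_{uv}$ is a perfect matching of $P_{uv}$ — this is exactly the co-augmenting case, and then $\mathcal{M}\backslash P_{uv}$ perfectly matches $G\backslash P_{uv}$. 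In every other case I claim the set $V(P_{uv})$ is \emph{not} matched within itself by $\mathcal{M}$: there is at least one vertex $w\in V(P_{uv})$ whose $\mathcal{M}$-partner lies outside $V(P_{uv})$. Removing $V(P_{uv})$ from $G$ then leaves that partner unmatched-in-principle, and more importantly destroys the global matching structure. To turn this into a rigorous non-existence statement, I would argue: if $G\backslash P_{uv}$ had a perfect matching $M'$, then $M'$ together with the matching edges of $\mathcal{M}$ lying inside $P_{uv}$ would need to cover $V(P_{uv})\setminus(\text{internally matched vertices})$; but those leftover vertices are matched by $\mathcal{M}$ to vertices \emph{outside} $V(P_{uv})$, which are already covered by $M'$, giving a vertex of degree $2$ in $M'\cup(\mathcal{M}\cap P_{uv})$ — and then a standard $\mathcal{M}\Delta(\text{new matching})$ symmetric-difference argument (as in Theorem~\ref{clark} and Corollary~\ref{c2}) produces a second perfect matching of $G$, contradicting uniqueness. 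Alternatively, and more simply for case (ii): if $P_{uv}$ has two consecutive non-matching edges at a vertex $w$, then $w$'s matching edge $e_w$ goes outside $V(P_{uv})$; if $G\backslash P_{uv}$ had a perfect matching $M'$, then $M'\cup\{e_w\}$ would already doubly cover the far endpoint of $e_w$, and restoring all such edges while deleting the overlap yields an alternating cycle through $w$ in $G$, hence (Corollary~\ref{c1}) a contradiction with the unique perfect matching.

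I would organize the write-up by first handling the easy observation that consecutive matching edges cannot occur, then treating the "bad endpoint" case and the "two consecutive non-matching edges" case, in each case exhibiting explicitly a vertex $w\in V(P_{uv})$ whose $\mathcal{M}$-mate lies outside $V(P_{uv})$. The uniform conclusion is: if $G\backslash P_{uv}$ admitted a perfect matching $M'$, then $(M'\setminus \mathcal{M})\cup(\mathcal{M}\setminus E(P_{uv}))$ — or a suitable symmetric-difference variant — would be a perfect matching of $G$ different from $\mathcal{M}$ (because it disagrees with $\mathcal{M}$ on $E(P_{uv})$), contradicting that $\mathcal{M}$ is the \emph{unique} perfect matching.

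The main obstacle I anticipate is the bookkeeping in case (i) when $P_{uv}$ alternates perfectly but has exactly one non-matching end edge: here $|V(P_{uv})|$ is odd, so already on parity grounds $G\backslash P_{uv}$ has odd order and cannot have a perfect matching — that subcase is actually trivial. The subtler subcase is when $P_{uv}$ has even order but is not alternating (two consecutive non-matching edges somewhere); then parity gives nothing and one genuinely needs the symmetric-difference/alternating-cycle argument to rule out a perfect matching in the complement. Making that argument airtight — correctly identifying which edges to swap so that the resulting edge set is genuinely a matching of all of $G$ and genuinely differs from $\mathcal{M}$ — is the one place where care is needed; everything else is parity counting or a direct appeal to Corollary~\ref{c1}.
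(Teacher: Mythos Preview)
Your odd-order subcase is right and matches the paper exactly: if $|V(P_{uv})|$ is odd then $G\backslash P_{uv}$ has odd order and is trivially unmatched. Where you diverge is the even-order case, and there you are working much harder than necessary. You try to locate a vertex $w\in V(P_{uv})$ whose $\mathcal{M}$-partner lies outside, and then patch together pieces of $\mathcal{M}$ and $M'$ via a symmetric-difference argument whose details you yourself flag as delicate. The paper bypasses all of this with one observation you overlooked: an even-order path $P_{uv}$ has its \emph{own} perfect matching $M$ (the unique alternating one), entirely independent of $\mathcal{M}$. If $G\backslash P_{uv}$ had a perfect matching $M'$, then $M\cup M'$ is a perfect matching of $G$; and since $P_{uv}$ is not co-augmenting, $M\ne \mathcal{M}\cap E(P_{uv})$, so $M\cup M'\ne\mathcal{M}$, contradicting uniqueness. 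That is the entire even case.

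Your structural case split (bad endpoint versus two consecutive non-matching edges) also has a small hole: an alternating path with \emph{both} end edges non-matching has even order but no two consecutive non-matching edges, so it falls into neither of the subcases you explicitly treat at the end. Your machinery could still handle it, but the parity split $|V(P_{uv})|$ odd versus even is both exhaustive and cleaner than splitting on the reason for failure of co-augmentation. In short: your proof can be made to work, but replace the symmetric-difference bookkeeping in the even case by the one-line ``use the path's own perfect matching'' argument and the difficulty you anticipated disappears.
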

\begin{proof}
Since $G$ has a perfect matching, then $G$ has even number of vertices. Therefore, when $P_{uv}$ has an odd number of vertices, $G \backslash P_{uv}$ does not have a perfect matching.\\
Suppose that $P_{uv}$ has an even number of vertices. Then, $P_{uv}$ has a perfect matching $M$. Therefore if  $G \backslash P_{uv}$ has a perfect matching $M'$, then $M \cup M'$ will form a new perfect matching of $G$. This contradicts the fact that $G$ has a unique perfect matching. 
\end{proof}\\

Now, we are ready to give a formula for the entries of the inverse of $\alpha$-hermitian adjacency matrix of bipartite mixed graph $X$ that has a unique perfect matching. This characterizing is based on the co-augmenting paths between vertices of $X$.

\begin{theorem}\label{Thm2}
Let $X$ be a bipartite mixed graph with unique perfect matching $\mathcal{M}$, $H_\alpha$ be its $\alpha$-hermitian adjacency matrix and
$$\Im_{i \to j}=\{ P_{i \to j}: P_{i \to j} \text{\small{ is a co-augmenting mixed path from the vertex }} i \text{ to the vertex } j \}$$ Then

\[ 
(H_\alpha^{-1})_{ij}= \left\{
\begin{array}{ll}
\displaystyle \sum_{P_{i\to j} \in \Im_{i\to j}} (-1)^{\frac{|E(P_{i \to j})|-1}{2}} h_\alpha(P_{i \to j})  & \text{if }  i\ne j    \\
0  & \text{ if } i =j
\end{array}
\right.
\]

\end{theorem}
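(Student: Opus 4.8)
The plan is to derive the formula directly from the general entry expansion in Theorem~\ref{Thm1} by pinning down exactly which mixed paths and which spanning elementary mixed subgraphs survive. The case $i=j$ is already the content of the previous theorem, so assume $i\ne j$. The key preliminary observation I would record is a structural one: \emph{if $Y\in\mathcal H$, then $Y$ has a unique spanning elementary mixed subgraph, namely its perfect matching, and that subgraph has no cycle component.} Indeed, a cycle component $C$ of a putative spanning elementary subgraph would be even, since $\Gamma(Y)$ is bipartite, hence would admit two distinct perfect matchings; combining each of them with the matching on $Y\setminus C$ furnished by the remaining $K_2$-components would produce two distinct perfect matchings of $Y$, contradicting uniqueness. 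So every spanning elementary mixed subgraph of $Y$ consists only of $K_2$'s, i.e.\ is a perfect matching, hence equals $\mathcal M_Y$. Feeding $Y=X$ into Theorem~\ref{Determinant} then gives $\det(H_\alpha)=(-1)^{r(\mathcal M)}2^{s(\mathcal M)}=(-1)^{n/2}$, where $n=|V(X)|$, because $\mathcal M$ is a disjoint union of $n/2$ copies of $K_2$, so $r(\mathcal M)=n/2$ and $s(\mathcal M)=0$.

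Next I would split the outer sum in Theorem~\ref{Thm1} over $\rho_{i\to j}$ according to whether $P_{i\to j}$ is co-augmenting. If $P_{i\to j}$ is \emph{not} co-augmenting, then by Theorem~\ref{nco} the graph $X\setminus P_{i\to j}$ has no perfect matching, and therefore, by the same even-cycle argument as above, no spanning elementary mixed subgraph at all; hence the inner sum over $X'$ is empty and that term contributes $0$. If $P_{i\to j}$ \emph{is} co-augmenting, then by Corollary~\ref{c2} we have $X\setminus P_{i\to j}\in\mathcal H$ with unique perfect matching $\mathcal M\setminus P_{i\to j}$, so by the structural observation its only spanning elementary mixed subgraph is $\mathcal M\setminus P_{i\to j}$, which has no cycle component; thus the inner sum collapses to the single term $(-1)^{r(\mathcal M\setminus P_{i\to j})}2^{0}\cdot\mathrm{Re}(1)=(-1)^{r(\mathcal M\setminus P_{i\to j})}$. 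Consequently the outer sum over $\rho_{i\to j}$ reduces to a sum over $\Im_{i\to j}$.

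It then remains to carry out the sign bookkeeping. A co-augmenting path $P_{i\to j}$ has edges alternating matching/non-matching and begins and ends with a matching edge, so if it uses $k$ matching edges it uses $k-1$ non-matching edges; hence $|E(P_{i\to j})|=2k-1$ and $|V(P_{i\to j})|=2k$. Therefore $X\setminus P_{i\to j}$ has $n-2k$ vertices and $\mathcal M\setminus P_{i\to j}$ has $(n-2k)/2$ components, giving $r(\mathcal M\setminus P_{i\to j})=n/2-k$. Substituting into Theorem~\ref{Thm1},
\[
(H_\alpha^{-1})_{ij}=\frac{1}{(-1)^{n/2}}\sum_{P_{i\to j}\in\Im_{i\to j}}(-1)^{2k-1}\,h_\alpha(P_{i\to j})\,(-1)^{n/2-k}
=\sum_{P_{i\to j}\in\Im_{i\to j}}(-1)^{k-1}h_\alpha(P_{i\to j}),
\]
using $n$ even, and since $k-1=\tfrac{|E(P_{i\to j})|-1}{2}$ this is precisely the asserted formula.

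Most of this is routine once the cited lemmas (Corollary~\ref{c2}, Theorem~\ref{nco}, Theorem~\ref{Thm1}) are in hand; the one point that needs genuine care is the observation that a bipartite graph with a unique perfect matching has no spanning elementary mixed subgraph containing a cycle, since that is exactly what forces both $\det(H_\alpha)$ and each surviving inner sum down to a single $\pm1$. I expect that to be the main (and only modest) obstacle, with the remainder being the substitution and sign arithmetic above.
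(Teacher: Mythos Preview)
Your proposal is correct and follows essentially the same route as the paper's proof: invoke Theorem~\ref{Thm1}, discard non--co-augmenting paths via Theorem~\ref{nco}, use Corollary~\ref{c2} to see that $X\setminus P_{i\to j}\in\mathcal H$, argue that the only spanning elementary mixed subgraph is the matching (the paper cites Corollary~\ref{c1} here while you give the equivalent direct even-cycle argument), and then do the same sign arithmetic with $\det(H_\alpha)=(-1)^{n/2}$. If anything, your write-up is slightly more explicit about why ``no perfect matching'' forces ``no spanning elementary subgraph,'' a step the paper leaves implicit.
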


\begin{proof}

Using Theorem \ref{Thm1},
$${ [H_{\alpha}^{-1}]_{ij} = \frac{1}{\det(H_\alpha)} \sum_{P_{i \rightarrow j} \in \rho_{i \rightarrow j}} \left[ (-1)^{|E(P_{i \rightarrow j})|} h_\alpha(P_{i \rightarrow j})  \sum_{X'} (-1)^{r(X')} 2^{s(X')} Re  (\prod_C h_{\alpha} ( \vec{C} ))  \right ]} $$

where the second sum ranges over all spanning elementary mixed subgraphs of $X \backslash P_{i \rightarrow j}$. The product is being taken over all mixed cycles $C$ of $X'$ and $\vec{C}$ is any mixed closed walk traversing $C$. \\

First, using Theorem \ref{nco} we observe that if $P_{i \rightarrow j}$ is not a co-augmenting path then $X \backslash P_{i\to j}$ does not have a perfect matching. Therefore, the term corresponds to $P_{i\to j}$ contributes zero. Thus we only care about the co-augmenting paths.
According to Corollary \ref{c2}, for any co-augmenting path  $P_{i\to j}$ from the vertex $i$ to the vertex $j$ we get  $X \backslash P_{i\to j}$ has a unique perfect matching, namely $\mathcal{M}\cap E( X \backslash P_{i\to j})$. Using Corollary \ref{c1},  $X \backslash P_{i\to j}$ does not contain an alternating cycle. Thus  $X \backslash P_{i\to j}$ contains only one spanning elementary mixed subgraph which is  $\mathcal{M} \backslash P_{i\to j}$. So,

$$ [H_{\alpha}^{-1}]_{ij} = \frac{1}{\det(H_\alpha)} \sum_{P_{i \to j} \in  \Im_{i\to j}} (-1)^{|E(P_{i \to j})|} h_\alpha(P_{i \to j})   (-1)^{V(X\backslash P_{i \to j})-k}  $$ 

where $k$ is the number of components of the spanning elementary mixed subgraph of $X \backslash P_{i\rightarrow j}$.
Observe that $| V(X \backslash P_{i\rightarrow j})|=n-(|E(P_{i \rightarrow j})|+1)$, $k=\frac{n-(|E(P_{i\rightarrow j})|+1)}{2}$ and $\det(H_\alpha) = (-1)^\frac{n}{2}$, we get the result.
\end{proof}\\

\section{Inverse of $\gamma$-hermitian adjacency matrix of a unicyclic bipartite mixed graph}
\begin{figure}[ht]
\centering
\includegraphics[width=0.6\linewidth]{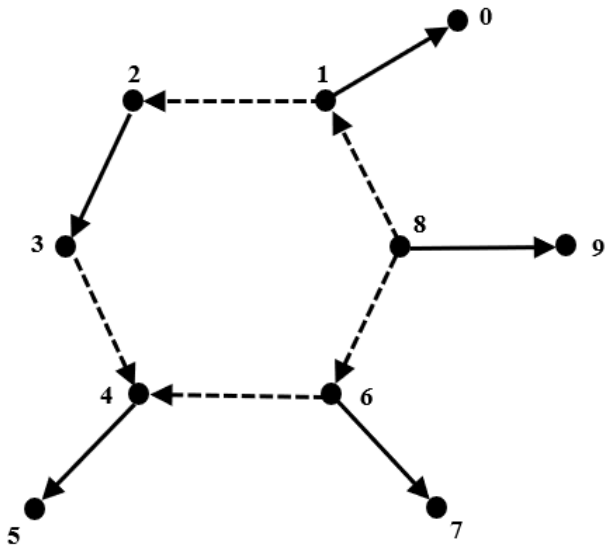}
\caption{Unicycle bipartite mixed graph with unique perfect matching and $4$ pegs  }
\label{fig:D}
\end{figure}
Let $\gamma$ be the third root of unity $e^{\frac{2\pi}{3}i}$. Using Theorem \ref{Thm2}, $h_\alpha(P_{i\to j})\in \{\alpha^i\}_{i=1}^n$ plays a central rule in finding the entries of $H_\alpha^{-1}$ and since the third root of unity has the property $\gamma^i \in \{1,\gamma, \overline{\gamma}\}$ we focus our study in this section on $\alpha=\gamma$. The property that $\alpha^i \in \{\pm1, \pm \alpha, \pm \overline{\alpha}\}$ is not true in general. To illustrate, consider the mixed graph shown in Figure \ref{fig:D} and let $\alpha=e^{\frac{\pi}{5}i}$. Using Theorem \ref{Thm2} we get $H_{05}^{-1}=e^{\frac{3\pi}{5}i}$ which is not from the set $\{\pm 1, \pm \alpha, \pm \overline{\alpha}\}$.\\
In this section, we are going to answer the classical question whether the inverse of $\gamma$-hermitian adjacency matrix of a unicyclic bipartite graph is $\{1,-1\}$ diagonally similar to a hermitian adjacency matrix of another mixed graph or not. Consider the mixed graph shown in Figure \ref{fig:D}. Then, obviously entries of $H_\gamma^{-1}$ are from the set $\{0,\pm 1, \pm \gamma, \pm \overline{\gamma}\}$ \\
Another thing we should bear in mind is the existence of $\{1,-1\}$ diagonal matrix $D$ such that $DH_\gamma D$ is $\gamma$-adjacency matrix of another mixed graph. In the mixed graph $X$ in Figure \ref{fig:D}, suppose that $D=diag\{d_{0},d_{1},\dots,d_{9}\}$ is $\{1,-1\}$ diagonal matrix with the property $DH_\gamma D$ has all entries from the set $\{0, \gamma, \overline{\gamma}\}$. Then, \\
\[
\begin{array}{l}
 d_0d_5=1   \\
 d_0d_9=-1 \\
 d_9d_7=-1  \\
 d_5d_7=-1
\end{array}
\]
which is impossible. Therefore, such diagonal matrix $D$ does not exist. To discuss the existence of the diagonal matrix $D$ further, let $G$ be a bipartite graph with unique perfect matching. Define $X_G$ to be the mixed graph obtained from $G$ by orienting all non matching edges. Clearly for $\alpha \ne 1$ and $\alpha \ne -1$ changing the orientation of the non matching edges will change the $\alpha$-hermitian adjacency matrix. For now lets restrict our study on $\alpha=-1$. Using Theorem \ref{Thm2} one can easily get the following observation.

\begin{observation}\label{corr1}
Let $G$ be a bipartite mixed graph with unique perfect matching $\mathcal{M}$, $H_{-1}$ be the $-1$-hermitian adjacency matrix of $X_G$ and
$$\Im_{i \to j}=\{ P_{i \to j}: P_{i \to j} \text{ is a co-augmenting mixed path from the vertex } i \text{ to the vertex } j \text{ in } X_G \}.$$ 
One can use Theorem \ref{Thm2} to get

\[ 
(H_{-1}^{-1})_{ij}= \left\{
\begin{array}{ll}
\displaystyle \vert \Im_{i\to j} \vert   & \text{if }  i\ne j    \\
0  & \text{ if } i =j
\end{array}
\right.
\]
\end{observation}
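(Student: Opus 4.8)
The plan is to specialize Theorem~\ref{Thm2} to the mixed graph $X_G$ with $\alpha=-1$ and to verify that every co-augmenting path contributes exactly $+1$ to the sum, so that the sum collapses to the cardinality $|\Im_{i\to j}|$.

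First I would record the structure of $H_{-1}(X_G)$. Since $\overline{-1}=-1$, an arc $uv\in E_1(X_G)$ satisfies $h_{uv}=h_{vu}=-1$, so the particular orientation chosen on the non-matching edges is irrelevant here: $H_{-1}(X_G)$ is the symmetric matrix whose $(u,v)$ entry is $1$ if $uv$ is a matching digon, $-1$ if $uv$ is a (now-oriented) non-matching edge, and $0$ otherwise. In particular $h_{-1}(W)\in\{1,-1\}$ for every mixed walk $W$, so each summand in Theorem~\ref{Thm2} is a real sign.

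Next I would analyze a co-augmenting path $P=P_{i\to j}\in\Im_{i\to j}$. By definition its underlying path alternates between matching and non-matching edges and both its first and last edges are matching edges; hence if it has $m$ non-matching edges it has $m+1$ matching edges and $|E(P)|=2m+1$ is odd. Computing $h_{-1}(P)$ as the product of the edge weights along $P$, each matching edge contributes $1$ and each non-matching edge contributes $-1$, so $h_{-1}(P)=(-1)^m$. On the other hand the sign prefactor appearing in Theorem~\ref{Thm2} is $(-1)^{(|E(P)|-1)/2}=(-1)^{m}$. Therefore the term associated with $P$ equals $(-1)^m(-1)^m=1$, independently of $P$.

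Summing over all $P\in\Im_{i\to j}$ then yields $(H_{-1}^{-1})_{ij}=|\Im_{i\to j}|$ when $i\neq j$, while the diagonal case $(H_{-1}^{-1})_{ii}=0$ is immediate from Theorem~\ref{Thm2} (equivalently from the preceding diagonal-vanishing theorem, since $X_G\in\mathcal{H}$). There is essentially no genuine obstacle; the only point that needs care is the bookkeeping of edge parities in a co-augmenting path together with the observation that the two factors of $(-1)^m$ cancel — everything else is a direct substitution into Theorem~\ref{Thm2}.
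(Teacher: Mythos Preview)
Your proposal is correct and follows exactly the route the paper indicates: the paper gives no separate proof but simply asserts that the formula follows from Theorem~\ref{Thm2}, and you have carried out precisely that specialization, checking that for $\alpha=-1$ each co-augmenting path $P$ with $m$ non-matching edges contributes $(-1)^{(|E(P)|-1)/2}\,h_{-1}(P)=(-1)^m(-1)^m=1$.
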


So, the question we need to answer now is when $A(G)$ and $H_{-1}(X_G)$ are diagonally similar. To this end, let $G$ be a bipartite graph with a unique perfect matching and $u\in V(G)$. Then for a walk $W=u=r_1,r_2,r_3,\dots,r_k$ in $G$, define a function that assign the value $f_W(j)$ for the $j^{th}$ vertex of $W$ as follows:
\[f_W(1)=1\]
and
\[
f_W(j+1)= \left\{
\begin{array}{ll}
-f_W(j)   & \text{if }  r_jr_{j+1} \text{is unmatching edge in } G     \\
f_W(j)  & \text{if }  r_jr_{j+1} \text{is matching edge in } G
\end{array}
\right.
\]

See Figure \ref{fig:E}.
\begin{figure}[ht]
\centering
\includegraphics[width=0.6\linewidth]{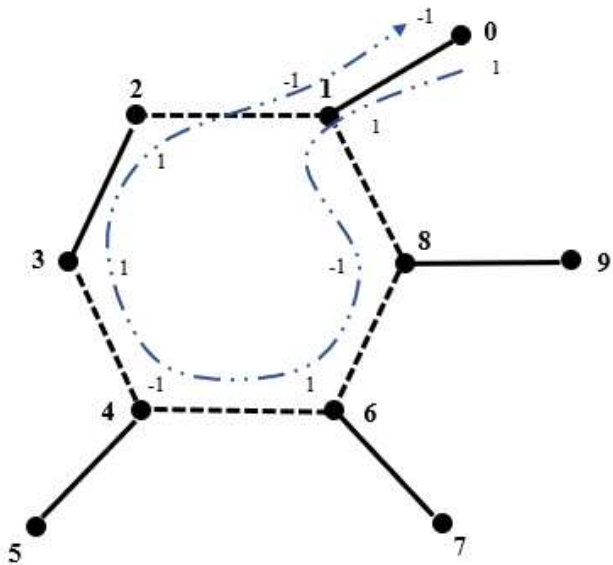}
\caption{The values of $f_W$ where $W$ is the closed walk starting from $0$  }
\label{fig:E}
\end{figure}
Since any path from a vertex $u$ to itself consist of pairs of identical paths and cycle, we get the following remark.
\begin{remark}
Let $G$ be bipartite graph with unique perfect matching and $F(u)=\{f_W(u): W \text{ is a closed walk in } G \text{ starting at u}\}.$ then, $\vert F(u) \vert=1$ if and only if the number of unmatching edges in each cycle of $G$ is even.
\end{remark}

Finally, let $G$ be a bipartite graph with unique perfect matching and suppose that each cycle of $G$ has an even number of unmatched edges. For a vertex $u\in V(G)$ define the function $w:V(G) \to \{1,-1\}$ by 
\[ w(v)=f_W(v), \text{  where } W \text{ is a path from } u \text{ to } v
\]
\begin{definition}
Suppose that $G$ is bipartite graph with unique perfect matching and every cycle of $G$ has even number of unmatched edges. Suppose further $V(G)=\{v_1,v_2,\dots,v_n\}$ and $u\in V(G)$. Define the matrix $D_u$ by $D_u=diag\{w(v_1),w(v_2),\dots,w(v_n)\}$.
\end{definition}
\begin{theorem}\label{her}
Suppose $G$ is a bipartite graph with unique perfect matching and every cycle of $G$ has an even number of unmatched edges. Then for every $u \in V(G)$, we get $D_uA(G)D_u=H_{-1}(X_G)$. 
\end{theorem}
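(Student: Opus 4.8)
The plan is to compute $\left(D_u A(G) D_u\right)_{ij}$ explicitly and compare it with $\left(H_{-1}(X_G)\right)_{ij}$ entry by entry. First I would record the shape of the right-hand side: since $\alpha=-1$ satisfies $\alpha=\overline{\alpha}=-1$, the choice of orientation made in forming $X_G$ is irrelevant, so $\left(H_{-1}(X_G)\right)_{ij}$ equals $1$ when $v_iv_j$ is a matching edge, $-1$ when $v_iv_j$ is a non-matching edge, and $0$ otherwise. On the left-hand side, $D_u$ is a $\{\pm1\}$ diagonal matrix, so $\left(D_u A(G) D_u\right)_{ij}=w(v_i)\,w(v_j)$ when $v_iv_j\in E(G)$ and $0$ otherwise, and both matrices vanish on the diagonal since $G$ has no loops. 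Hence the theorem reduces to the single claim: for every edge $v_iv_j$ of $G$, one has $w(v_i)\,w(v_j)=1$ if $v_iv_j\in\mathcal{M}$ and $w(v_i)\,w(v_j)=-1$ if $v_iv_j\notin\mathcal{M}$.

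Next I would establish the closed form $f_W(j)=(-1)^{k}$, where $k$ is the number of non-matching edges among the first $j-1$ edges of the walk $W$; this is immediate by induction from the recursive definition of $f_W$. In particular, if $W$ runs from $u$ to $v$ then $f_W(v)=(-1)^{\,\#\{\text{non-matching edges of }W\}}$. I would then record the well-definedness of $w$: under the hypothesis that every cycle of $G$ has an even number of unmatched edges, $f_W(v)$ does not depend on the walk $W$ from $u$ to $v$. Indeed, given two such walks $W_1,W_2$, concatenating $W_1$ with the reverse of $W_2$ produces a closed walk at $u$ whose number of non-matching edges is $\#\{\text{non-matching edges of }W_1\}+\#\{\text{non-matching edges of }W_2\}$; viewed in the $\mathbb{F}_2$ cycle space this closed walk is a sum of cycles, and since the parity of the number of non-matching edges is additive and, by hypothesis, vanishes on every cycle, this sum is even, so $f_{W_1}(v)=f_{W_2}(v)$. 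This is essentially the content of the Remark preceding the theorem, so I would simply invoke it. (I take $G$ connected here; if not, the argument is applied verbatim inside each component with its own base vertex, and $D_u$ is block-diagonal accordingly.)

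Finally, for the key claim fix an edge $v_iv_j\in E(G)$, choose any path $P$ from $u$ to $v_i$, and let $W$ be the walk obtained by appending the edge $v_iv_j$ to $P$, so that $W$ runs from $u$ to $v_j$. From the recursion, $f_W(v_j)=s\cdot f_P(v_i)$ with $s=1$ if $v_iv_j\in\mathcal{M}$ and $s=-1$ otherwise. By the well-definedness just established, $f_W(v_j)=w(v_j)$ and $f_P(v_i)=w(v_i)$, whence $w(v_i)\,w(v_j)=s\cdot w(v_i)^2=s=\left(H_{-1}(X_G)\right)_{ij}$, as required; combined with the reduction in the first paragraph this finishes the proof. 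I do not expect a serious obstacle here: the only delicate point is the well-definedness of the function $w$, which is precisely the hypothesis on cycles (packaged as the Remark), and everything else is sign bookkeeping through the recursive definition of $f_W$.
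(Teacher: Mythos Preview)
Your proposal is correct and follows essentially the same approach as the paper: compute $(D_uA(G)D_u)_{xy}=d_x a_{xy} d_y$ and verify that $d_x d_y$ equals $1$ on matching edges and $-1$ on non-matching edges, which matches $(H_{-1}(X_G))_{xy}$. The paper's proof is a terse two-line version of your argument, simply asserting the edge sign claim ``using the definition of $D_u$'' and implicitly relying on the preceding Remark for well-definedness, whereas you spell out the closed form $f_W(j)=(-1)^k$, the well-definedness via the cycle-space argument, and the edge-appending step; your write-up is more complete but not a different method.
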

\begin{proof}
Note that, for $x,y \in V(G)$, we have $(D_uA(G)D_u)_{xy}=d_xa_{xy}d_y$. Using the definition of $D_u$ we get,

\[
d_xd_y= \left\{
\begin{array}{ll}
-1   & \text{if }  xy \text{ is an unmatching edge in } G     \\
1  & \text{if }  xy \text{ is a matching edge in } G\\
0 & \text{ otherwise }
\end{array}
\right.
\]
Therefore, $(D_uA(G)D_u)_{xy}=(H_{-1})_{xy}$.

\end{proof}\\

Now we are ready to discuss the inverse of $\gamma$-hermitian adjacency matrix of unicyclic mixed graph. Let $X$ be a unicyclic bipartite graph with unique perfect matching. An arc or digon of $X$ is called a peg if it is a matching arc or digon and incident to a vertex of the cycle in $X$. Since $X$ is unicyclic bipartite graph with unique perfect matching, $X$ has at least one peg. Otherwise the cycle in $X$ will be alternate cycle, and thus $X$ has more than one perfect matching which contradicts the assumption. Since each vertex of the cycle incident to a matching edge and $|V(X)|$ is even, $X$ should contain at least two pegs. The following theorem will deal with unicyclic bipartite mixed graphs $X\in \mathcal{H}$ with more than two pegs.

\begin{theorem}\label{peg}
Let $X$ be a unicyclic bipartite graph with unique perfect matching. If $X$ has more than two pegs, then between any two vertices of $X$ there is at most one co-augmenting path. 
\end{theorem}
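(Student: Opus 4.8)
The plan is to argue by contradiction: suppose $i\neq j$ and there are two distinct co‑augmenting paths $P_1\neq P_2$ from $i$ to $j$, and locate a cycle vertex at which the required alternation of one of them must fail. Since co‑augmenting paths and pegs depend only on the underlying graph together with $\mathcal M$, I would work in $\Gamma(X)$. First I would record the structural fact about paths in a unicyclic graph. Let $C$ be the unique cycle of $X$. Deleting $E(C)$ leaves a forest in which each tree contains exactly one vertex of $C$; write $\pi(x)$ for that vertex (so $\pi(x)=x$ when $x\in C$). Because a simple path that leaves $C$ into a tree branch cannot return to $C$, any simple path meets $C$ in a single (possibly empty) arc. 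Hence, if $\pi(i)=\pi(j)$ there is a unique $i$–$j$ path and there is nothing to prove; and if $a:=\pi(i)\neq\pi(j)=:b$, then every $i$–$j$ path is of the form $Q_i+A+Q_j$, where $Q_i$ is the unique tree path from $i$ to $a$, $Q_j$ the unique tree path from $b$ to $j$, and $A$ is one of the two arcs $A_1,A_2$ of $C$ joining $a$ and $b$. Thus $P_1$ and $P_2$ share $Q_i$ and $Q_j$ and use the two different arcs.

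Next I would exploit the alternation: in a co‑augmenting path the edges alternate matching/non‑matching with matching edges at both ends, so at every internal vertex exactly one of the two incident path‑edges is a matching edge, and at each endpoint the unique incident path‑edge is matching. Note $a,b$ are internal vertices of both $P_1,P_2$ (e.g.\ $a=j$ would force $b=\pi(j)=j=a$). Analyze $a$. If $a=i$, the first edges of $P_1$ and $P_2$ are the two distinct edges of $C$ at $a$, and both would have to be matching — impossible, as $a$ lies on a unique matching edge. If $a\neq i$, let $e$ be the last edge of $Q_i$; applying the alternation at $a$ to $P_1$ and to $P_2$ forces either the first $C$-edge of $A_1$ at $a$ \emph{and} the first $C$-edge of $A_2$ at $a$ to be matching (again impossible), or $e$ to be the matching edge at $a$. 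Since $Q_i$ avoids $C$ except at $a$, $e\notin E(C)$, so in the surviving case the matching edge at $a$ is a peg. The symmetric argument at $b$ shows the matching edge at $b$ is a peg. So we now have pegs at the two distinct cycle vertices $a$ and $b$.

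Finally I would invoke the hypothesis of more than two pegs: there is a peg $p$ incident to a cycle vertex $c\notin\{a,b\}$, where $p$ is the matching edge at $c$ and $p\notin E(C)$. The vertex $c$ is interior to exactly one of $A_1,A_2$, say $A_1$. Then $c$ is an internal vertex of $P_1$ whose two incident path‑edges are the two $C$-edges of $A_1$ at $c$; neither equals $p$, so neither is matching, contradicting the alternation of $P_1$ at $c$ (and if $c$ is interior to $A_2$ the same contradiction hits $P_2$). This contradiction proves the statement. I expect the only genuine work to be the structural lemma on simple paths in a unicyclic graph and bookkeeping the boundary cases $a=i$ and $b=j$; once those are in place, the counting of pegs delivers the contradiction immediately. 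The conceptual heart is the observation that a co‑augmenting path can pass \emph{through} a cycle vertex only when the matching edge there is itself a cycle edge, so each peg forbids a cycle vertex from being interior to an arc used by a co‑augmenting path — and three pegs leave no room for two different arcs.
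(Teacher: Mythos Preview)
Your argument is correct and follows essentially the same route as the paper's proof: assume two co-augmenting $i$--$j$ paths, observe that in a unicyclic graph they must together cover the cycle $C$ and differ only on the two $C$-arcs between the entry/exit vertices $a=\pi(i)$ and $b=\pi(j)$, and then use a peg at some third cycle vertex $c\notin\{a,b\}$ to violate the matching/non-matching alternation at $c$. Your write-up is considerably more explicit than the paper's (you spell out the tree-plus-arc decomposition of paths and the boundary cases $a=i$, $b=j$), but the underlying idea is the same; one small wording slip is the sentence ``$a,b$ are internal vertices of both $P_1,P_2$,'' which is not literally true when $a=i$ or $b=j$, though you immediately treat those cases separately so nothing is lost.
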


\begin{proof}
Let $\rho_1, \rho_2$ and $\rho_3$ be three pegs in $X$,  $u,v \in V(D)$, $C$ is the unique cycle in $X$ and suppose there are two co-augmenting paths between $u$ and $v$, say $P$ and $P'$. Since $X$ is unicyclic, we have $V(C) \subset P \cup P'$,

Case1: $E(P) \cup E(P')$ does not contain any of the pegs. Then, if $v$ is the $X$ cycle vertex incident to $\rho_1$ then, $v$ is not matched by an edge in the cycle, which means one of $P$ or $P'$ is not co-augmenting path, which contradicts the assumption.

Case2: $(E(P) \cup E(P')$ contain pegs. Then, $(E(P) \cup E(P')$ should contain at most two pegs, suppose that $\rho_1$ and $v$ is the vertex of $X$ cycle that incident to $\rho_1$. Then, $v$ belongs to either $P$ or $P'$, again since $\rho_1$ is a matched edge, $v$ is not matched by the cycle edges which means one of $P$ or $P'$ is not co-augmenting path. which contradicts the assumption.

\end{proof}

\begin{corollary}\label{p1}
Let $X$ be a unicycle bipartite mixed graph with unique perfect matching. If $X$ has more than two pegs, then
\begin{enumerate}
\item $
(H_\alpha^{-1})_{ij}= \left\{
\begin{array}{ll}
(-1)^{\frac{|E(P_{i \to j})|-1}{2}} h_\alpha(P_{i \to j})  & \text{if }  P_{i\rightarrow j} \text{ is a co-augmenting path from } i \text{ to } j    \\
0  & \text{ Otherwise }
\end{array}
\right.
$
\item If the cycle of $X$ contains even number of unmatching edges, then for any vertex $u\in V(X)$, $D_uH^{-1}_\gamma(X)D_u$ is $\gamma$-hermitian adjacency matrix of a mixed graph.
\end{enumerate}
\end{corollary}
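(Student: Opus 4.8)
\medskip
\noindent\emph{Proof idea.}
The plan is to read both parts off from Theorems~\ref{Thm2}, \ref{peg} and~\ref{her}, the only genuinely new ingredient being a parity count. For part (1): Theorem~\ref{peg} says that when $X$ has more than two pegs the set $\Im_{i\to j}$ contains at most one co-augmenting path for every $i\neq j$, so the sum in Theorem~\ref{Thm2} collapses either to the single term coming from the unique $P_{i\to j}$ or to $0$ when $\Im_{i\to j}=\varnothing$; together with the entry $(H_\alpha^{-1})_{ii}=0$ from Theorem~\ref{Thm2} this is exactly the claimed formula. So part (1) is immediate.

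For part (2) set $M:=D_uH_\gamma^{-1}(X)D_u$. The inverse of a Hermitian matrix is Hermitian and $D_u$ is a real diagonal matrix, so $M$ is Hermitian, and $M_{ii}=0$ by part (1). Any Hermitian matrix with zero diagonal all of whose off-diagonal entries lie in $\{0,1,\gamma,\overline{\gamma}\}$ is the $\gamma$-hermitian adjacency matrix of a mixed graph (a digon for an entry $1$, an arc for a conjugate pair $\{\gamma,\overline{\gamma}\}$), so it suffices to show that every off-diagonal entry of $M$ lies in $\{0,1,\gamma,\overline{\gamma}\}$. By part (1), $M_{ij}=w(i)\,w(j)\,(-1)^{(|E(P)|-1)/2}\,h_\gamma(P)$ when there is a (necessarily unique) co-augmenting path $P=P_{i\to j}$, and $M_{ij}=0$ otherwise. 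Since $\gamma^{3}=1$ we have $h_\gamma(P)\in\{1,\gamma,\overline{\gamma}\}$, so the whole claim reduces to the sign identity
\[ w(i)\,w(j)=(-1)^{(|E(P_{i\to j})|-1)/2}. \]

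To prove this identity, note first that $w(v)=(-1)^{a(v)}$ where $a(v)$ is the number of unmatching edges of a $u$--$v$ path in $X$; this is well defined by the Remark, because the unique cycle $C$ of $X$ is assumed to have an even number of unmatching edges. If $|E(P_{i\to j})|=2k+1$ then, since $P_{i\to j}$ is co-augmenting, its edges alternate $M,\overline M,M,\dots,M$ and it carries exactly $k=(|E(P_{i\to j})|-1)/2$ unmatching edges. Concatenating a $u$--$i$ path, the path $P_{i\to j}$, and the reverse of a $u$--$j$ path produces a closed walk $\vec W$ at $u$ that traverses $a(i)+k+a(j)$ unmatching edges counted with multiplicity. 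On the other hand the set of edges traversed an odd number of times by $\vec W$ is an even subgraph of $X$, hence an element of the cycle space of the unicyclic graph $X$, which is one-dimensional and spanned by $E(C)$; so this set is either empty or $E(C)$, and therefore the number of unmatching edges traversed by $\vec W$ is congruent modulo $2$ to the number of unmatching edges of $C$, which is even. Hence $a(i)+a(j)+k\equiv 0\pmod 2$, i.e.\ $w(i)w(j)=(-1)^{a(i)+a(j)}=(-1)^{k}$, as wanted. Substituting back, $M_{ij}=h_\gamma(P_{i\to j})\in\{1,\gamma,\overline{\gamma}\}$ (or $0$), which proves part (2).

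I expect the parity count of the previous paragraph to be the only delicate point, and it is the only place the hypothesis on $C$ is used: one must argue carefully that the edge multiset of $\vec W$ decomposes as the fundamental cycle (taken $0$ or $1$ times mod $2$) plus edges used an even number of times, so that the parity of its unmatching-edge count is inherited from $C$. The remaining steps---substituting into Theorems~\ref{Thm2} and~\ref{peg}, and observing that conjugating by a real diagonal matrix preserves the Hermitian property and the zero diagonal---are routine.
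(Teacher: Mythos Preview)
Your proof is correct. Part (1) is handled exactly as in the paper: combine Theorem~\ref{Thm2} with the uniqueness from Theorem~\ref{peg}.

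For part (2) you take a somewhat different route from the paper. The paper argues in one line that the sign pattern of $H_\gamma^{-1}$ coincides with that of $A(\Gamma(X))^{-1}$ (both being governed by the exponent $(|E(P_{i\to j})|-1)/2$ in part (1)), so $D_uH_\gamma^{-1}D_u$ has entries in $\{0,1,\gamma,\overline\gamma\}$ iff $D_uA(\Gamma(X))^{-1}D_u$ has entries in $\{0,1\}$; it then invokes Theorem~\ref{her} (together, implicitly, with Observation~\ref{corr1}: inverting $D_uA\,D_u=H_{-1}(X_{\Gamma(X)})$ gives $D_uA^{-1}D_u=H_{-1}^{-1}$, whose entries are $|\Im_{i\to j}|\ge 0$). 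You instead prove the sign identity $w(i)w(j)=(-1)^{(|E(P_{i\to j})|-1)/2}$ directly, via a clean cycle-space parity count on the closed walk formed by a $u$--$i$ path, $P_{i\to j}$, and a $j$--$u$ path. Your argument is more self-contained and makes explicit where the hypothesis ``$C$ has an even number of unmatching edges'' enters, whereas the paper's argument is shorter but leans on the machinery of Theorem~\ref{her} and Observation~\ref{corr1} and leaves the reader to supply the inversion step. Both are valid; your parity computation is essentially an unpacking of what Theorem~\ref{her} encodes for adjacent vertices, extended along the co-augmenting path.
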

\begin{proof}
Part one is obvious using Theorem \ref{Thm2} together with Theorem \ref{peg}.\\
For part two, we observe that $\gamma^i\in \{1,\gamma,\overline{\gamma}\}$. Therefore all $H^{-1}_\gamma(X)$ entries are from the set $\{\pm 1,\pm \gamma,\pm \overline{\gamma}\}$. Also the negative signs in $A(\Gamma(X))^{-1}$ and in $H_\gamma^{-1}$ appear at the same position. Which means $D_uH_\gamma^{-1}D_u$ is $\gamma$-hermitian adjacency matrix of a mixed graph if and only if $D_uA(\Gamma(X))D_u$ is adjacency matrix of a graph. Finally, Theorem \ref{her} ends the proof.
\end{proof}

Now we will take care of unicycle graph with exactly two pegs.
Using the same technique of the proof of Theorem \ref{peg}, one can show the following:

\begin{theorem}\label{peg2}
	Let $D$ be a unicyclic bipartite graph with unique perfect matching and exactly two pegs $\rho_1$ and $\rho_2$. Then for any two vertices of $D$, $u$ and $v$, if there are two co-augmenting paths from the vertex $u$ to the vertex $v$, then $\rho_1$ and $\rho_2$ are edges of the two paths.
\end{theorem}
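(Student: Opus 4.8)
The plan is to mimic the case analysis used in the proof of Theorem~\ref{peg}, but now keeping careful track of the two pegs $\rho_1$ and $\rho_2$ rather than discarding a third peg to force a contradiction. Suppose $u$ and $v$ admit two distinct co-augmenting paths $P$ and $P'$. Since $X$ (here $D$) is unicyclic and $P\ne P'$, the symmetric difference $E(P)\triangle E(P')$ must contain a cycle, and the only cycle available is the unique cycle $C$; hence $V(C)\subseteq V(P)\cup V(P')$, and in fact $P\cup P'$ contains all of $C$. The key structural fact I would extract first is that every vertex of $C$ is matched, and a vertex of $C$ that is \emph{not} incident to a peg must be matched by a cycle edge (its matching edge lies on $C$), whereas a vertex incident to a peg is matched by that peg, which is \emph{off} the cycle.

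Next I would argue, exactly as in Theorem~\ref{peg} Case~1, that if neither $\rho_1$ nor $\rho_2$ lies on $P\cup P'$, then pick the vertex $w$ of $C$ incident to $\rho_1$; since $\rho_1\notin P\cup P'$, in at least one of $P,P'$ the vertex $w$ is entered and left along cycle edges (or is an endpoint reached along a cycle edge), which forces a non-matching/non-matching or matching/matching alternation failure at $w$ relative to the co-augmenting condition — contradiction. The same argument rules out the situation where exactly one of the two pegs lies on $P\cup P'$: the peg $\rho_i$ not used still has its cycle-endpoint $w$ sitting on $C\subseteq P\cup P'$, matched only by $\rho_i\notin P\cup P'$, so again the alternation along $C$ at $w$ breaks the co-augmenting property of whichever path passes through $w$. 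Therefore both $\rho_1$ and $\rho_2$ must appear among the edges of $P\cup P'$.

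It then remains to upgrade "$\rho_1,\rho_2\in E(P)\cup E(P')$" to "$\rho_1,\rho_2$ are edges of \emph{each} of $P$ and $P'$" — or, more precisely, to the statement as written, that they are edges of the two paths. Here I would use that $P$ and $P'$ are both co-augmenting from $u$ to $v$: at each cycle vertex $w_k$ incident to $\rho_k$, the matching edge at $w_k$ is $\rho_k$ itself, so any co-augmenting path through $w_k$ that uses a matching edge at $w_k$ must use $\rho_k$; and a co-augmenting path covering $V(C)$ is forced to use the matching edge at each of its interior cycle vertices. A short argument on how $P$ and $P'$ partition the cycle $C$ into two arcs (they share the endpoints of these arcs, and between them traverse $C$) shows each of $\rho_1,\rho_2$ is forced onto both paths, or onto each path once one accounts for the endpoints $u,v$ possibly coinciding with $w_1$ or $w_2$.

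I expect the main obstacle to be the bookkeeping in this last step: carefully handling the subcases according to whether $u$ or $v$ is itself a cycle vertex, whether $u$ or $v$ coincides with a peg-endpoint $w_1$ or $w_2$, and the orientation/parity of the two arcs of $C$ cut out by $P$ and $P'$. None of this is deep, but it is exactly the kind of case split where the clean statement "$\rho_1$ and $\rho_2$ are edges of the two paths" has to be checked against every boundary configuration; the alternation (co-augmenting) constraint does all the real work, and the rest is ensuring no configuration slips through.
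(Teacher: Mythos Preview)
Your proposal is correct and follows exactly the route the paper indicates: the paper gives no explicit proof of Theorem~\ref{peg2}, stating only that it follows ``using the same technique of the proof of Theorem~\ref{peg}'', and your argument is precisely that elaboration. The bookkeeping you flag in the last step is real but resolves cleanly once you observe that the two cycle vertices $a,b$ where $P$ and $P'$ split and rejoin must coincide with the peg-endpoints $w_1,w_2$ (since every other cycle vertex is interior to an arc and hence must be matched on-cycle), and that $u,v$ cannot themselves equal $a$ or $b$ because the first and last edges of a co-augmenting path are matching while both cycle edges at $w_k$ are non-matching; this forces each peg to be the last (resp.\ first) edge of the common $u$-to-$a$ (resp.\ $b$-to-$v$) segment and hence to lie on both $P$ and $P'$.
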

	
Let $X$ be a unicyclic bipartite mixed graph with unique perfect matching and exactly two pegs, and let $uv$ and $u'v'$ be the two pegs of $X$ where $v$ and $v'$ are vertices of the cycle of $X$. We, denote the two paths between $v$ and $v'$ by $\mathcal{F}_{v\rightarrow v'}$ and $\mathcal{F}_{v\rightarrow v'}^c$.

\begin{theorem}\label{two pegs}

Let $X$ be a unicyclic bipartite mixed graph with unique perfect matching and exactly two pegs and let $C$ be the cycle of $X$. If there are two coaugmenting paths between the vertex $x$ and the vertex $y$, then 

\[
(H_\alpha^{-1})_{xy}= (-1)^{\frac{|E(P_{x \to y})|-1}{2}} \frac{h_\alpha(P_{x \to v}) h_\alpha(P_{y \to v'}) }{h_\alpha(\mathcal{F}_{v \to v'})} \left[ (-1)^{m+1} h_\alpha(C)+1 \right] 
\]

where $\mathcal{F}_{v \to v'}$ is chosen to be the part of the path $P_{x \to y}$ in the cycle $C$ and $C$ is of size $2m$. 
\end{theorem}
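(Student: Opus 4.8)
The plan is to apply Theorem~\ref{Thm2} directly: when there are two co-augmenting paths between $x$ and $y$, the inverse entry $(H_\alpha^{-1})_{xy}$ is the sum of exactly two terms, one for each path, and the goal is to massage that two-term sum into the stated closed form. First I would invoke Theorem~\ref{peg2}: since $X$ has exactly two pegs $uv$ and $u'v'$, both pegs must lie on the two co-augmenting paths $P$ and $P'$, and since $X$ is unicyclic the two paths differ only in how they traverse the cycle $C$. Concretely, each of $P,P'$ has the form (co-augmenting path $x\to v$) $+$ (peg or its reverse) $+$ (one of the two $v$-to-$v'$ arcs of $C$) $+$ (peg or its reverse) $+$ (co-augmenting path $v'\to y$), so that $P$ uses the arc $\mathcal{F}_{v\to v'}$ and $P'$ uses the complementary arc $\mathcal{F}_{v\to v'}^c$, while the ``tails'' $P_{x\to v}$ and $P_{y\to v'}$ are common to both. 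I would fix notation so that $\mathcal{F}_{v\to v'}$ is the part of $P_{x\to y}$ inside $C$ and set $|E(C)|=2m$.

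Next I would compute $h_\alpha$ and the sign exponent for each of the two paths. Because $h_\alpha$ is multiplicative along a walk (by the definition of $h_\alpha(W)$), we have, up to the shared factor $h_\alpha(P_{x\to v})h_\alpha(P_{y\to v'})$ times the peg weights, that $h_\alpha(P)$ carries a factor $h_\alpha(\mathcal{F}_{v\to v'})$ and $h_\alpha(P')$ carries a factor $h_\alpha(\mathcal{F}_{v\to v'}^c)$. Writing $h_\alpha(\mathcal{F}_{v\to v'}^c) = h_\alpha(\mathcal{F}_{v\to v'}^c)$ and noting that traversing $\mathcal{F}_{v\to v'}$ then the reverse of $\mathcal{F}_{v\to v'}^c$ is a closed walk around $C$, one gets $h_\alpha(\mathcal{F}_{v\to v'}^c) = h_\alpha(\mathcal{F}_{v\to v'})\cdot \overline{h_\alpha(\vec C)}$ or $h_\alpha(\mathcal{F}_{v\to v'})\cdot h_\alpha(\vec C)$ depending on orientation conventions; I'd pin this down and then factor $h_\alpha(\mathcal{F}_{v\to v'})$ out of both terms, leaving $1$ from the $P$-term and $\pm h_\alpha(C)$ from the $P'$-term. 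Similarly I'd track the sign exponents $(-1)^{(|E(P)|-1)/2}$ and $(-1)^{(|E(P')|-1)/2}$: since $|E(P')| = |E(P)| - |E(\mathcal{F}_{v\to v'})| + |E(\mathcal{F}_{v\to v'}^c)|$ and $|E(\mathcal{F}_{v\to v'})| + |E(\mathcal{F}_{v\to v'}^c)| = 2m$, the two exponents differ by $m - |E(\mathcal{F}_{v\to v'})|$, and a short parity bookkeeping (using that co-augmenting paths have odd edge-count, so $\mathcal{F}_{v\to v'}$ has an even number of edges, etc.) should collapse the relative sign to $(-1)^{m+1}$. Factoring $(-1)^{(|E(P_{x\to y})|-1)/2}$ and $h_\alpha(P_{x\to v})h_\alpha(P_{y\to v'})/h_\alpha(\mathcal{F}_{v\to v'})$ out of the sum then yields the bracket $\left[(-1)^{m+1}h_\alpha(C) + 1\right]$, as claimed.

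The main obstacle I anticipate is the careful orientation- and parity-bookkeeping in the middle step: getting the relationship between $h_\alpha(\mathcal{F}_{v\to v'}^c)$, $h_\alpha(\mathcal{F}_{v\to v'})$ and $h_\alpha(\vec C)$ exactly right (which depends on the direction in which $\vec C$ is traversed and on conjugation conventions for reversed walks), and simultaneously verifying that the peg weights contributed to $P$ and to $P'$ are identical so that they genuinely cancel out of the ratio. I would handle this by choosing $\vec C$ to traverse $v\to v'$ along $\mathcal{F}_{v\to v'}$ and then back along the reverse of $\mathcal{F}_{v\to v'}^c$, so that $h_\alpha(\vec C) = h_\alpha(\mathcal{F}_{v\to v'})\,\overline{h_\alpha(\mathcal{F}_{v\to v'}^c)}$, hence $h_\alpha(\mathcal{F}_{v\to v'}^c) = \overline{h_\alpha(\vec C)}\,h_\alpha(\mathcal{F}_{v\to v'})$; since entries of $H_\gamma$ are roots of unity, $\overline{h_\alpha(\vec C)}$ and $h_\alpha(\vec C)$ play symmetric roles and, after also absorbing the sign $(-1)^{m+1}$, the bracket takes the stated form. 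A final sanity check against the explicit $10$-vertex example of Figure~\ref{fig:D} (where $m$ and $h_\gamma(C)$ are computable by hand) would confirm the signs.
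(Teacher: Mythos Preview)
Your plan is essentially the paper's own proof: apply Theorem~\ref{Thm2} to get a two-term sum, invoke Theorem~\ref{peg2} to split each path into the shared tails $P_{x\to v},P_{v'\to y}$ and the complementary cycle arcs $\mathcal{F}_{v\to v'},\mathcal{F}_{v\to v'}^c$, then use $\overline{h_\alpha(\mathcal{F}_{v\to v'})}\,h_\alpha(\mathcal{F}_{v\to v'}^c)=h_\alpha(C)$ together with the sign count $k_1+k_2=m+1$ (where $k_1,k_2$ are the numbers of non-matching edges on the two arcs) to extract the bracket $[(-1)^{m+1}h_\alpha(C)+1]$. One small correction to your parity aside: since the pegs are matching edges, each arc $\mathcal{F}_{v\to v'}$ begins and ends with a non-matching edge and therefore has an \emph{odd} number of edges (this is why $k_1=\tfrac{|E(\mathcal{F}_{v\to v'})|+1}{2}$ is an integer), not an even number as you wrote.
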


\begin{proof}

Suppose that $P_{x \to y}$ and $Q_{x \to y}$ are the paths between the vertices $x$ and $y$, using theorem \ref{Thm2} we have 

\[
(H_\alpha^{-1})_{xy}= (-1)^{\frac{|E(P_{x \to y})|-1}{2}} h_\alpha(P_{x \to y}) + (-1)^{\frac{|E(Q_{x \to y})|-1}{2}} h_\alpha(Q_{x \to y}) 
\]

Now, using Theorem \ref{peg2}, $P_{x \to y}$ $(Q_{x \to y})$ can be divided into three parts $P_{x \to v}$, $\mathcal{F}_{v \to v'}$ and $P_{v' \to y}$ (resp.  $Q_{x \to v}=P_{x \to v},\text{ } \mathcal{F}_{v \to v'}^c$ and $Q_{v' \to y}=P_{v' \to y}$).\\ Observe that the number of unmatched edges in $\mathcal{F}_{v \to v'}$ is $k_1=\frac{|E(\mathcal{F}_{v \to v'})|+1}{2}$ and the number of unmatched edges in $\mathcal{F}_{v \to v'}^c$ is $k_2=m-\frac{|E(\mathcal{F}_{v \to v'})|+1}{2}+1$ we get 

\[
(H_\alpha^{-1})_{xy}=(-1)^k h_\alpha(P_{x \to v}) h_\alpha(P_{v \to y}) \left(  (-1)^{k_1}  h_\alpha(\mathcal{F}_{v \to v'}) +  (-1)^{k_2}  h_\alpha(\mathcal{F}_{v \to v'}^c)  \right)
\]
where $k=\frac{|E(P_{x \to v})|+|E(P_{v' \to y})|}{2}-1$

Note here $\overline{ h_\alpha(\mathcal{F}_{v \to v'})} h_\alpha(\mathcal{F}_{v \to v'}^c) = h_\alpha(C)$, therefore, 
\[
(H_\alpha^{-1})_{xy}= (-1)^{\frac{|E(P_{x \to y})|-1}{2}} \frac{h_\alpha(P_{x \to v}) h_\alpha(P_{y \to v'}) }{h_\alpha(\mathcal{F}_{v \to v'})} \left[ (-1)^{m+1} h_\alpha(C)+1 \right] 
\]

\end{proof}

\begin{theorem}\label{NH}
Let $X$ be a unicyclic bipartite mixed graph with unique perfect matching and $H_\gamma$ be its $\gamma$-hermitian adjacency matrix. If $X$ has exactly two pegs, then $H_\gamma^{-1}$ is not $\pm 1$ diagonally similar to $\gamma$-hermitian adjacency matrix of a mixed graph.
\end{theorem}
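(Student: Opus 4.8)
The plan is to argue by contradiction: suppose there exist a $\{\pm 1\}$-diagonal matrix $D$ and a mixed graph $Y$ with $DH_\gamma^{-1}D=H_\gamma(Y)$, and derive a contradiction from the explicit entries of $H_\gamma^{-1}$ supplied by Theorem \ref{two pegs} (and, where a single entry does not suffice, from products of entries along a closed walk).

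First I would record the structure forced by ``exactly two pegs''. Write the two pegs as $\rho_1=uv$ and $\rho_2=u'v'$, with $v,v'$ distinct vertices of the cycle $C$ of $X$ and $|V(C)|=2m$; since $v,v'$ are covered only by $\rho_1,\rho_2$, also $u\neq u'$. The $2m-2$ vertices of $C$ other than $v,v'$ are matched by edges lying on $C$, so a parity count along each of the two arcs $\mathcal{F}:=\mathcal{F}_{v\to v'}$ and $\mathcal{F}_{v\to v'}^{c}$ of $C$ from $v$ to $v'$ shows both arcs have odd length and, read from $v$, their edges alternate non-matching, matching, $\dots$, non-matching. Hence the two walks $u\,v\,(\text{along }\mathcal{F})\,v'\,u'$ and $u\,v\,(\text{along }\mathcal{F}_{v\to v'}^{c})\,v'\,u'$ are distinct co-augmenting paths between $u$ and $u'$, so Theorem \ref{two pegs} applies to the pair $x=u,\ y=u'$; by Theorem \ref{peg2} it applies, similarly, to every pair joined by two co-augmenting paths.

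Next, set $z:=h_\gamma(C)$; being a product of entries of $H_\gamma$, $z$ is a power of $\gamma$, so $z\in\{1,\gamma,\overline{\gamma}\}$, and the bracket $B:=(-1)^{m+1}z+1$ in Theorem \ref{two pegs} depends only on $C$. By that theorem $(H_\gamma^{-1})_{xy}=W_{xy}B$ with $|W_{xy}|=1$ for every pair joined by two co-augmenting paths, and evaluating $B$ gives: $B=2$ if $z=1$ and $m$ is odd; $|B|=\sqrt3$ (with $B=1-\gamma$ or $1-\overline{\gamma}$) if $z\in\{\gamma,\overline{\gamma}\}$ and $m$ is even; $B=0$ if $z=1$ and $m$ is even; and $B=-\gamma$ or $-\overline{\gamma}$ if $z\in\{\gamma,\overline{\gamma}\}$ and $m$ is odd. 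In the first two cases the proof is complete: a $\{\pm1\}$-conjugation preserves the modulus of each entry, while every entry of a $\gamma$-hermitian adjacency matrix has modulus at most $1$, so an entry of $H_\gamma^{-1}$ of modulus $2$ or $\sqrt3$ is incompatible with $DH_\gamma^{-1}D=H_\gamma(Y)$.

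The remaining cases $B=0$ and $|B|=1$ are where the real work lies, and here I would pass from a single entry to a closed-walk invariant. For any closed walk $x_1x_2\cdots x_kx_1$ with all $(H_\gamma^{-1})_{x_ix_{i+1}}\neq 0$ the diagonal entries of $D$ cancel in pairs, so $\prod_i(DH_\gamma^{-1}D)_{x_ix_{i+1}}=\prod_i(H_\gamma^{-1})_{x_ix_{i+1}}$; if $DH_\gamma^{-1}D$ is $\gamma$-hermitian this product must be a power of $\gamma$, i.e.\ lie in $\{1,\gamma,\overline{\gamma}\}$. So it suffices to find, in the support graph of $H_\gamma^{-1}$, a closed walk whose entry-product equals $-1$, $-\gamma$ or $-\overline{\gamma}$. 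Using Theorem \ref{Thm2} — which away from the two pegs evaluates an entry as the single term $(-1)^{(|E(P)|-1)/2}h_\gamma(P)$ of its unique co-augmenting path $P$ — I would build such a walk out of the matching pegs $uv$, $v'u'$, the interior matching edges of the arcs, and two co-augmenting detours running around $C$ via $\mathcal{F}$ and via $\mathcal{F}_{v\to v'}^{c}$ respectively, arranged so that exactly one factor $-1$ survives. The principal obstacle is the parity bookkeeping: one must verify, uniformly in $|E(\mathcal{F})|$, $|E(\mathcal{F}_{v\to v'}^{c})|$ and in the choice of which cycle-edges carry orientations, that the various exponents $(-1)^{(|E|-1)/2}$ and the twist $h_\gamma(C)$ combine to leave a single unabsorbable sign; in the case $B=0$, where the spanning-pair entries vanish and the walk must be assembled purely from single-path entries, this demands a more delicate choice of walk than in the case $|B|=1$.
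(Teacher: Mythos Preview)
Your strategy is essentially the paper's: compute the bracket $B=(-1)^{m+1}h_\gamma(C)+1$, dispose of the cases $|B|\in\{\sqrt3,2\}$ by a modulus argument, and handle the remaining cases $|B|\in\{0,1\}$ by a sign/parity obstruction. Your closed-walk invariant $\prod_i(H_\gamma^{-1})_{x_ix_{i+1}}$ is exactly the product of the constraints $d_{x_i}d_{x_{i+1}}$ that the paper manipulates, so the two arguments are equivalent in content.

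Where your proposal remains a sketch is precisely where the paper does the work. For $|B|=1$ (so $m$ odd, $h_\gamma(C)\in\{\gamma,\overline\gamma\}$) the paper uses the $4$-walk $u\to o\to o'\to u'\to u$ with $oo'$ a matching edge on one arc: since both arcs then have the \emph{same} parity of non-matching edges, the single-path signs at $uo$ and $o'u'$ together with the sign hidden in $B$ force the product to be $-\gamma^j$. For $B=0$ (so $m$ even, $h_\gamma(C)=1$) the entry $(H_\gamma^{-1})_{uu'}$ vanishes, so no $4$-walk through $u,u'$ is available; the paper sidesteps this not by building a longer walk but by splitting on the undetermined value $d_ud_{u'}\in\{\pm1\}$ and, in each branch, choosing a matching edge on the arc of the \emph{appropriate} parity to reach a contradiction. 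This case split is exactly the device that replaces your ``more delicate choice of walk''; note that a single $6$-walk $u\to o\to o'\to u'\to e'\to e\to u$ (one matching edge on each arc) also works and gives product $(-1)^{m+1}\gamma^j=-\gamma^j$, but you must then check the degenerate situation where one arc has length~$1$ and carries no matching edge. In short, your plan is correct and aligned with the paper; what is missing is the explicit parity bookkeeping, which the paper carries out via the $d_ud_{u'}$ case split rather than a single closed walk.
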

\begin{proof}
Let $xx'$ and $yy'$ be the two pegs of $X$, where $x'$ and $y'$ are vertices of the cycle $C$ of $X$ . Then, using Theorem \ref{two pegs} we have
\[(H_\gamma^{-1})_{xy}= (-1)^{\frac{|E(P_{x \to y})|-1}{2}} \frac{h_\gamma(P_{x \to x'}) h_\gamma(P_{y \to y'}) }{h_\gamma(\mathcal{F}_{x' \to y'})} \left[ (-1)^{m+1} h_\gamma(C)+1 \right] 
\]

where $\mathcal{F}_{x' \to y'}$ is chosen to be the part of the path $P_{x \to y}$ in the cycle $C$ and $C$ is of size $2m$. Suppose that $D=diag\{d_v:v\in V(X)\}$ is a $\{\pm 1\}$ diagonal matrix with the property that $DH_\gamma^{-1}D$ is $\gamma$-hermitian adjacency matrix of a mixed graph.

\begin{itemize}
\item Case1: Suppose $m$ is even say $m=2r$.\\
Observe that $(-1)^{m+1}h_\gamma(C)+1=1-h_\gamma(C)$. If $h_\gamma(C) \in \{1, \gamma, \gamma^2\}$, then $1-h_\gamma(C) \notin \{\pm 1, \pm \gamma, \pm \gamma^2\}$ and so $H_\gamma^{-1}$ is not $\pm 1$ diagonally similar to $\gamma$-hermitian adjacency matrix of a mixed graph. Thus we only need to discuss the case when $h_\gamma(C)=1$. To this end, suppose that $h_\gamma(C)=1$. Then $(H_\gamma^{-1})_{xy}=0$. Since the length of $C$ is $4r$, we have the number of unmatching edges (number of matching edges) in $C$ is $\frac{4r+2}{2}$ (resp. $\frac{4r-2}{2}$). Since the number of unmatching edges in $C$ is odd, there is a coaugmenting  path $\mathcal{F}_{x \to y}$ from $x$ to $y$ that contains odd number of unmatching edges and another coaugmenting path $\mathcal{F}^c_{x \to y}$ with even number of unmatching edges. Now, let $o'o$($e'e$ )  be any matching edges in the path $\mathcal{F}_{x \to y}$ (resp. $\mathcal{F}^c_{x \to y}$). Then, without loss of generality we may assume that there is a coaugmenting path between $x$ and $e$, $x$ and $o$ (and hence there is a co-augmenting path between $y$ and $o'$, $y$ and $e'$ ). Now, if $d_xd_y=1$ then 
\begin{itemize}
\item $(DH_\gamma^{-1}D)_{xo}=(-1)^kd_xh_\gamma(P_{x \to o})d_o$
\item $(DH_\gamma^{-1}D)_{yo'}=(-1)^{k'}d_yh_\gamma(P_{y \to o'})d_{o'}$
\end{itemize}
Observe that $k+k'$ is odd number, we have $d_od_{o'}=-1$. This contradict the fact that for every matching edge $gg'$, $d_gd_{g'}=1$.\\
The case when $d_xd_y=-1$ is similar to the above case but with considering the path $\mathcal{F}^c_{x \to y}$ instead of $\mathcal{F}_{x \to y}$ and the vertex $e$ instead of $o$.

\item Case2: Suppose $m$ is odd say $2r+1$. Then
\[
(H_\gamma^{-1})_{xy}= (-1)^{\frac{|E(P_{x \to y})|-1}{2}} \frac{h_\alpha(P_{x \to v}) h_\alpha(P_{y \to v'}) }{h_\alpha(\mathcal{F}_{v \to v'})} \left[ h_\alpha(C)+1 \right] .
\]
Therefore,
\[(H_\gamma^{-1})_{xy}= (-1)^{\frac{|E(P_{x \to y})|-1}{2}} \frac{h_\alpha(P_{x \to v}) h_\alpha(P_{y \to v'}) }{h_\alpha(\mathcal{F}_{v \to v'})}\left\{
\begin{array}{lll}
-\gamma   & \text{if }  h_\alpha(C)=\gamma^2  \\
-\gamma^2  & \text{if }  h_\alpha(C)=\gamma\\
2  & \text{if }  h_\alpha(C)=1

\end{array}
\right.
\]
Obviously, when $h_\alpha(C)=1$, $H_\gamma^{-1}$ is not $\pm 1$ diagonally similar to $\gamma$-hermitian adjacency matrix of a mixed graph. Thus, the cases we need to discuss here are when $h_\alpha(C)=\gamma$ and $h_\alpha(C)=\gamma^2$.\\
Since $m$ is odd, then $C$ contains an even number of unmatched edges. Therefore, either both paths between $x$ and $y$, $\mathcal{F}_{x\to y}$ and $\mathcal{F}_{x\to y}^c$, contain odd number of unmatching edges or both of them contains even number of unmatching edges. \\
To this end, suppose that both of the paths $\mathcal{F}_{x\to y}$ and $\mathcal{F}_{x\to y}^c$ contain odd number of unmatched edges. Then, $(H_\gamma^{-1})_{xy}\in \{\gamma^i\}_{i=0}^2$, which means $d_xd_y=1$.
Finally, let $v'v$ be any matching edge in $\mathcal{F}_{x\to y}$ where $P_{x \to v}$ and $P_{v' \to y}$ are coaugmenting paths, then obviously $d_vd_{v'}=1$. But one of the coaugmenting paths $P_{x \to v}$ and $P_{v' \to y}$ should contain odd number of unmatching edges and the other one should contain even number of unmatched edges. Which means $d_xd_vd_{v'}d_y=-1$. This contradicts the fact that $d_vd_{v'}=1$.\\
In the other case, when both $\mathcal{F}_{x\to y}$ and $\mathcal{F}_{x\to y}^c$ contain even mumber of unmatching edges, one can easily deduce that $d_xd_y=-1$ and using same technique we can get another contradiction. 

\end{itemize}

\end{proof}

Note that Corollary \ref{p1} and Theorem \ref{NH} give a full characterization of a unicyclic bipartite mixed graph with unique perfect matching where the inverse of its $\gamma$-hermitian adjacency matrix is $\{\pm 1\}$ diagonally similar to $\gamma$-hermitian adjacency matrix of a mixed graph. We summarize this characterization in the following corollary.

\begin{theorem}
Let $X$ be a unicyclic bipartite mixed graph with unique perfect matching and $H_\gamma$ its $\gamma$-hermitian adjacency matrix. Then, $H_\gamma^{-1}$ is $\pm 1$ diagonally similar to $\gamma$-hermitian adjacency matrix if and only if $X$ has more than two pegs and the cycle of $X$ contains even number of unmatching edges.
\end{theorem}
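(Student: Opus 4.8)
The plan is to treat the two implications separately, with essentially all of the work concentrated in one case that the earlier statements do not yet cover. For the \emph{if} direction, suppose $X$ has more than two pegs and the cycle of $X$ carries an even number of unmatching edges; then part~(2) of Corollary~\ref{p1} already supplies, for every vertex $u$ of $X$, a $\{\pm1\}$ diagonal matrix $D_u$ with $D_uH_\gamma^{-1}(X)D_u$ equal to the $\gamma$-hermitian adjacency matrix of a mixed graph, so nothing remains. For the \emph{only if} direction I would argue contrapositively: if the condition on the right fails, then either (a) $X$ has exactly two pegs, or (b) $X$ has more than two pegs but its cycle has an odd number of unmatching edges. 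Case~(a) is precisely Theorem~\ref{NH}. Hence the entire content of the proof is case~(b).

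For case~(b), let $C$ be the unique cycle of $X$. I first record a parity fact: a cycle vertex not incident to a peg is matched to one of its two neighbours on $C$, so the cycle vertices not incident to a peg are paired up by edges of $C$; consequently the number $p$ of pegs is even, and ``more than two pegs'' forces $p\ge 4$. Since $p>2$, Theorem~\ref{peg} applies, so between any two vertices there is at most one co-augmenting path and Corollary~\ref{p1}(1) gives $(H_\gamma^{-1})_{xy}=(-1)^{(|E(P_{x\to y})|-1)/2}\,h_\gamma(P_{x\to y})$ whenever a co-augmenting path $P_{x\to y}$ exists, and $0$ otherwise. Now assume, towards a contradiction, that $D=\operatorname{diag}(d_v)$ is a $\{\pm1\}$ matrix with $DH_\gamma^{-1}D$ a $\gamma$-hermitian adjacency matrix. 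Because $\gamma$ is a primitive third root of unity, $h_\gamma(P_{x\to y})\in\{1,\gamma,\overline\gamma\}$ is a power of $\gamma$, and every entry of a $\gamma$-hermitian adjacency matrix lies in $\{0,1,\gamma,\overline\gamma\}$. For a co-augmenting pair $\{x,y\}$ the nonzero entry $(DH_\gamma^{-1}D)_{xy}$ equals $d_xd_y(-1)^{(|E(P_{x\to y})|-1)/2}h_\gamma(P_{x\to y})$, so the real scalar $d_xd_y(-1)^{(|E(P_{x\to y})|-1)/2}=(DH_\gamma^{-1}D)_{xy}\,\overline{h_\gamma(P_{x\to y})}$ is a product of two powers of $\gamma$, hence itself a power of $\gamma$; as the only real power of $\gamma$ is $1$, I obtain the key relation $d_xd_y=(-1)^{(|E(P_{x\to y})|-1)/2}$ for every co-augmenting pair $\{x,y\}$.

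To finish, I would exhibit a closed chain of co-augmenting paths that forces the number of unmatching edges of $C$ to be even. List the peg-incident cycle vertices in cyclic order as $v^{(1)},\dots,v^{(p)}$, let $u^{(k)}$ be the off-cycle matching partner of $v^{(k)}$, and let $Q_k$ be the arc of $C$ from $v^{(k)}$ to $v^{(k+1)}$ (indices modulo $p$); the arcs $Q_1,\dots,Q_p$ partition the edge set of $C$. No interior vertex of $Q_k$ is peg-incident, so each is matched to a neighbour on $Q_k$; walking along $Q_k$, the perfect matching forces the interior vertices to pair consecutively, which is possible only when $Q_k$ has an odd number of edges, and then $P_k:=u^{(k)}v^{(k)}Q_kv^{(k+1)}u^{(k+1)}$ is a co-augmenting path with $(-1)^{(|E(P_k)|-1)/2}=(-1)^{\#\{\text{unmatching edges of }Q_k\}}$. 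Applying the key relation to the pairs $\{u^{(1)},u^{(2)}\},\dots,\{u^{(p)},u^{(1)}\}$ and multiplying the $p$ resulting identities, the left side collapses to $\prod_k d_{u^{(k)}}^2=1$, since each $u^{(k)}$ occurs in exactly two of the factors, while the right side is $(-1)^{\sum_k\#\{\text{unmatching edges of }Q_k\}}=(-1)^{\#\{\text{unmatching edges of }C\}}$. Hence $C$ has an even number of unmatching edges, contradicting case~(b); this completes the proof. I expect the one delicate point to be the local analysis of an arc $Q_k$ — that a perfect matching forces its interior vertices to pair consecutively, so $|E(Q_k)|$ is odd, together with the sign identity $(-1)^{(|E(P_k)|-1)/2}=(-1)^{\#\{\text{unmatching edges of }Q_k\}}$ — while everything else is bookkeeping on top of Corollary~\ref{p1} and Theorem~\ref{NH}.
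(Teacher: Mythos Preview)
Your argument is correct. The \emph{if} direction and the two-peg subcase of the \emph{only if} direction are exactly as the paper intends, via Corollary~\ref{p1}(2) and Theorem~\ref{NH}. For the remaining subcase~(b) --- more than two pegs but an odd number of unmatching edges on $C$ --- your closed-chain argument is sound: the local analysis of each arc $Q_k$ (interior vertices of $Q_k$ are forced to match consecutively, so $|E(Q_k)|$ is odd and $P_k$ is co-augmenting with $(-1)^{(|E(P_k)|-1)/2}=(-1)^{\#\{\text{unmatching edges of }Q_k\}}$) is correct, and telescoping the key relation $d_xd_y=(-1)^{(|E(P_{x\to y})|-1)/2}$ around the cycle yields the parity contradiction exactly as you describe.

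The paper, by contrast, presents this final theorem as a summary statement and offers no separate proof beyond the sentence ``Corollary~\ref{p1} and Theorem~\ref{NH} give a full characterization.'' Those two results do not, as written, cover your case~(b): Corollary~\ref{p1}(2) only asserts the positive conclusion under the \emph{hypothesis} that $C$ has evenly many unmatching edges, and Theorem~\ref{NH} deals solely with two pegs. The paper's proof of Corollary~\ref{p1}(2) does contain a reduction --- the signs in $H_\gamma^{-1}$ and in $A(\Gamma(X))^{-1}$ occur at the same positions, so $DH_\gamma^{-1}D$ is a $\gamma$-hermitian adjacency matrix iff $DA(\Gamma(X))^{-1}D$ is a $\{0,1\}$ matrix --- which would push case~(b) back to the classical undirected question, but the paper never actually establishes the negative answer for that question either (Theorem~\ref{her} and the preceding remark only show that the \emph{particular} matrix $D_u$ fails to be well-defined when $C$ has an odd number of unmatching edges, not that every $\{\pm1\}$ diagonal $D$ fails). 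Your cycle-of-pegs argument therefore genuinely completes the proof, supplying a step the paper leaves implicit at best.
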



\section*{Acknowledgment}

The authors wish to acknowledge the support by the Deanship of Scientific Research at German Jordanian University.

\bibliography{InverseUni6}

\end{document}